\newcommand{\Marginpar}[1]{\marginpar{\tiny{#1}}}
\renewcommand{\Marginpar}[1]{}
\newtheorem{theorem}{Theorem}[section]
\newtheorem{proposition}[theorem]{Proposition}
\newtheorem{lemma}[theorem]{Lemma}
\newtheorem{corollary}[theorem]{Corollary}
\theoremstyle{definition}
\theoremstyle{remark}
\newtheorem{remark}[theorem]{Remark}
\def\R{\mathbb{R}}\def\C{\mathbb{C}}
\DeclareMathOperator\tr{tr}
\DeclareMathOperator\Pf{Pf}
\DeclareMathOperator\End{End}
\DeclareMathOperator\vol{vol}
\title[Constant curvature almost K\"ahler manifolds]{
Closed almost K\"ahler 4-manifolds of constant non-negative Hermitian holomorphic sectional curvature are K\"ahler.
}
\author{Mehdi Lejmi}
\address{Department of Mathematics, Bronx Community College of CUNY, Bronx, NY 10453, USA.}
\email{mehdi.lejmi@bcc.cuny.edu}
\author{Markus Upmeier}
\thanks{The authors are thankful to Vestislav Apostolov for his comments. The first author was supported in part by a PSC-CUNY award \#60053-00 48, jointly funded by The Professional Staff Congress and The City University of New York.
The first author thanks all the members of the Math Department of University of Augsburg for their warm hospitality during his stay in July 2017.
The second author was partially funded by the Deutsche Forschungsgemeinschaft (DFG, German Research Foundation) -- UP 85/2-1, UP 85/3-1}
\address{Universit\"atsstrasse 14, 86159 Augsburg, Germany.}
\email{Markus.Upmeier@math.uni-augsburg.de}
\begin{document}

\maketitle

%\setcounter{tocdepth}{1}
%\tableofcontents

\begin{abstract}
We show that a closed almost K\"ahler 4-manifold of globally constant
holomorphic sectional curvature $k\geq 0$ with respect to the canonical Hermitian
connection is automatically K\"ahler. The same result holds for $k<0$ if we require
in addition that the Ricci curvature is $J$-invariant.
The proofs are based on the observation that such manifolds are self-dual,
so that Chern--Weil theory implies useful integral formulas, which are then
combined with results from Seiberg--Witten theory.
\end{abstract}

\section{Introduction}

Dating back to the Goldberg conjecture \cite{MR0238238}, the question
how the geometry of a closed almost K\"ahler $4$-manifold can force the
integrability of an almost complex structure has been considered by many
authors. This conjecture has been verified for non-negative scalar curvature
by Sekigawa \cite{MR787184}.
Much else of what is known has now been subsumed by
Apostolov--Armstrong--Dr\u{a}ghici in \cite{MR1921552},
where it is shown that the third Gray curvature condition is in fact sufficient.
There are also strong results for the $*$-Ricci curvature \cite{MR1604803}, \cite{MR1317012}.
Assuming non-negative scalar curvature, the third Gray condition can 
be relaxed to the Ricci tensor being $J$-invariant \cite{MR1675380}.%\todo{It seems that the $4$-dimensional case is due to Sekigawa, but I can't find the reference. Also, I can't find Draghici's paper "Almost Kähler 4-manifolds with J-invariant Ricci tensor" from 1999 (in the Houston Journal). Mehdi: I put Tedi' paper in Dropbox. References to Sekigawa are in Tedi' paper. We can also cite Tedi paper and we refer to the references therein}

Strengthening the Einstein condition, Blair \cite{MR1043404} has shown
that almost K\"ahler manifolds of constant sectional curvature are flat
K\"ahler. For non-flat examples it is natural to consider instead
the holomorphic sectional curvature, where one restricts the
sectional curvature to $J$-invariant planes.

Restricting to a subclass of the Gray--Hervella classification, say almost K\"ahler,
the problem of classifying manifolds of constant holomorphic sectional curvature
was posed by Gray--Vanhecke \cite{MR531928}. A related problem is understanding
when the notions of pointwise constant and globally constant holomorphic sectional
curvature agree. This holds for nearly K\"ahler manifolds \cite{MR0358648}, but
there are non-compact counterexamples in the almost K\"ahler case \cite{MR1806472}.
Moreover, for almost K\"ahler manifolds the classification problem so far has remained
inconclusive (see also~\cite{MR1638199,MR745268}). % Cho--Sekigawa contains a mistake.

The purpose of this paper is to prove such a classification result for the Hermitian
holomorphic sectional curvature instead of the Riemannian one. We obtain optimal
results for non-negative curvature, while in the negative case we need to impose the
`natural' condition of the Ricci tensor being $J$-invariant.

Note that constant Hermitian holomorphic sectional curvature does not obviously
imply the Einstein condition, nor that any of the scalar curvatures are constant.
Assuming this, we also obtain a partial result in Corollary~\ref{Mehdi-obs}.

\subsection{Overview of results}

Let $(M,g,J,F)$ be an almost Hermitian $4$-manifold. Define the (first canonical) Hermitian connection by
\begin{equation}\label{canonical-connection}
	\nabla_X Y \coloneqq D^g_XY - \frac12 J(D^g_XJ)Y.
\end{equation}
From its curvature we derive the Hermitian holomorphic sectional curvature
\begin{equation}\label{sectional-curvature}
H(X) \coloneqq -R^\nabla_{X,JX,X,JX},
\end{equation}
a function on the unit tangent bundle.
We now state the main results of this paper.

\begin{theorem}\label{main-theorem}
Let $M$ be a closed almost K\"ahler $4$-manifold of globally constant Hermitian holomorphic sectional
curvature $k\geq 0$.

Then $M$ is K\"ahler--Einstein, holomorphically isometric to:
\begin{description}
\item[($k>0$)] $\mathbb{C}P^2$ with the Fubini--Study metric.
\item[($k=0$)] a complex torus or a hyperelliptic curve with the Ricci-flat K\"ahler metric.
\end{description}
%In particular, $k>0$ implies that $M$ is simply-connected.
\end{theorem}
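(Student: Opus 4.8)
The plan is to turn the algebraic rigidity forced by constant Hermitian holomorphic sectional curvature into self-duality of $g$, extract topological identities from Chern--Weil theory, and then close the argument with Seiberg--Witten theory.

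\textbf{Step 1: self-duality.} Working in a local $J$-adapted orthonormal frame, I would expand $R^\nabla$ using \eqref{canonical-connection}, writing it in terms of the Levi--Civita curvature $R^g$ together with quadratic expressions in $D^gJ$ (equivalently, in the Nijenhuis tensor, since $M$ is almost K\"ahler). Polarising the identity $H(X)\equiv k$ on the unit tangent bundle yields a full system of algebraic relations on $R^\nabla$ --- the Hermitian analogue of the classical fact that constant holomorphic sectional curvature determines the K\"ahler curvature tensor. Transcribed back to $R^g$, these relations should be exactly enough to annihilate the anti-self-dual Weyl curvature $W^-$ for the orientation induced by $F$, so that $(M,g)$ is self-dual. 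This is the ``key observation'' flagged in the abstract; it is the most computation-heavy step, but it is pointwise and purely linear-algebraic, hence not the conceptual obstacle.

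\textbf{Step 2: Chern--Weil and the almost-K\"ahler identities.} Once $W^-=0$, the Gauss--Bonnet and Hirzebruch signature integrands are expressible through $s$, the trace-free Ricci tensor, and $W^+$ alone, giving in particular an exact formula for $c_1^2[M]=2\chi(M)+3\tau(M)$, and $\tau(M)\ge 0$. I would combine this with the standard almost-K\"ahler integral formulas: the one comparing $\int_M s\,dv_g$ and $\int_M s^*\,dv_g$ with $\int_M|\nabla J|^2\,dv_g$, and the one realising $c_1^2[M]$ through the $*$-scalar curvature and the $*$-Ricci form. Constant Hermitian holomorphic sectional curvature $k\ge 0$ also pins down the Hermitian Ricci form as a constant multiple of $F$, feeding positivity (strict when $k>0$) into these formulas; the outcome should be an integral inequality whose equality case is precisely the K\"ahler condition $\nabla J=0$, along with the sign information needed to run Seiberg--Witten theory (evenness of $b_1(M)$, and $c_1\cdot[F]>0$ when $k>0$).

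\textbf{Step 3: Seiberg--Witten and conclusion.} The symplectic form $F$ furnishes a canonical $\mathrm{spin}^c$ structure, and Taubes' nonvanishing theorem --- supplemented, when $b^+(M)=1$, by the small-perturbation/chamber refinement, which applies here because $k>0$ makes the scalar curvature positive and rules out basic classes of the ``wrong'' sign --- constrains $c_1$ so tightly that the curvature estimates it implies (lower bounds on $\int_M s^2\,dv_g$ and $\int_M|W^+|^2\,dv_g$ in terms of Chern numbers) combine with the Chern--Weil identities of Step 2 to squeeze the defect $\int_M|\nabla J|^2\,dv_g$ to zero. Hence $M$ is K\"ahler, the canonical connection is now Levi--Civita, and $M$ is a K\"ahler surface of constant holomorphic sectional curvature, i.e.\ a complex space form; compactness then leaves only $\mathbb{C}P^2$ with the Fubini--Study metric when $k>0$, and a compact flat K\"ahler surface --- a complex torus or a hyperelliptic surface, with its Ricci-flat (indeed flat) K\"ahler metric --- when $k=0$. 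The main obstacle I anticipate is the borderline case $k=0$: positivity is no longer strict, so vanishing of $\int_M|\nabla J|^2\,dv_g$ must be obtained by genuinely pinching an integral of a non-negative quantity between the Chern--Weil and Seiberg--Witten bounds, and the $b^+=1$ Seiberg--Witten bookkeeping must be handled with care.
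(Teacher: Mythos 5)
Your Steps 1 and 2 track the paper closely: constant Hermitian holomorphic sectional curvature is indeed shown to be equivalent (pointwise) to $W^-=0$ together with $\ast\rho=r$, and the paper then runs Chern--Weil theory for both the Levi--Civita and the Hermitian connection to get $\sigma\geq 0$ and a reverse Bogomolov--Miyaoka--Yau inequality $3\sigma\geq\chi$ whose equality case is exactly the K\"ahler condition. The genuine gap is in Step 3. First, your assertion that $k>0$ makes the scalar curvature positive is unjustified: in the paper's normalization $s_g=12v$ and the only control available is $v\leq k/2$ (Lemma~\ref{integrability-criterion}); there is no lower bound on $v$, so $s_g$ may well be negative somewhere. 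Second, the plan to close the argument by combining Taubes' nonvanishing with LeBrun-type curvature estimates (lower bounds on $\int_M s_g^2$ and $\int_M|W^+|^2$ in terms of $c_1^2$) is left entirely schematic at the one point where the proof actually has to be hard: you never identify the inequality that would pinch $\int_M|\nabla J|^2$ to zero, and for the manifolds that actually arise here one has $b^+=1$ (rational or ruled), where those estimates are chamber-dependent and the canonical class need not be a monopole class in the relevant chamber. As written, Step 3 is a hope rather than an argument.

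The paper's route through Seiberg--Witten theory is different and worth internalizing. The key observation is that $s_C=4k-2v$ with $v\leq k/2$, so if $M$ is \emph{not} K\"ahler then $c_1(TM)\cup[\omega]$ integrates to something strictly positive (even when $k=0$ --- so the ``borderline'' you worry about is not borderline at all). By Taubes, Lalonde--McDuff and Liu, a closed symplectic $4$-manifold with $c_1\cdot[\omega]>0$ is symplectomorphic to $\mathbb{C}P^2$ or a ruled surface. The case $\mathbb{C}P^2$ is excluded because $3\sigma=\chi$ there, and the equality case of the reverse BMY inequality forces K\"ahlerness, contradicting the assumption; the ruled case is excluded because $\sigma\leq 0$ for ruled surfaces while the Chern--Weil identity gives $\sigma\geq 0$, and $\sigma=0$ forces $k=0$ and K\"ahlerness by a separate integral argument. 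No curvature estimates from Seiberg--Witten theory are needed, only the symplectic classification. If you want to salvage your version of Step 3 you would have to carry out the $b^+=1$ wall-crossing analysis and exhibit the pinching inequality explicitly; the paper's detour through the classification of rational and ruled symplectic manifolds avoids all of that.
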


Recall here that a hyperelliptic curve is a quotient of a complex torus by a finite free group action. This classification is well-known for K\"ahler manifolds (see~\cite[Theorems~7.8,~7.9]{MR1393941} and \cite{MR0052869,MR0063740} in the simply-connected case and \cite[Theorem~2]{MR1348147} in general). The main goal of this paper is to show that $J$ is automatically integrable.

In case $k<0$ we shall prove the following weaker result:

\begin{theorem}\label{main-theorem-negative}
Let $M$ be a closed almost K\"ahler $4$-manifold of pointwise constant Hermitian holomorphic sectional
curvature $k < 0$. Assume also that the Ricci tensor is $J$-invariant. Then $M$ is K\"ahler--Einstein, holomorphically isometric to a compact quotient of the complex hyperbolic ball $\mathbb{B}^4$ with the Bergman metric.
\end{theorem}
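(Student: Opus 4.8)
The strategy is to reduce everything to proving that $J$ is integrable. Once that is known, $g$ is a genuine K\"ahler metric of pointwise constant holomorphic sectional curvature $k<0$ --- the connection $\nabla$ of \eqref{canonical-connection} is then the Levi-Civita connection, so $H$ is the ordinary holomorphic sectional curvature --- and a Schur-type lemma upgrades pointwise constancy to global constancy; the classical classification of complex space forms (the references already cited after Theorem~\ref{main-theorem}) then identifies $(M,g,J)$ with a compact quotient of $(\mathbb{B}^4,\text{Bergman metric})$, which is K\"ahler--Einstein. So the whole content is the integrability of $J$. Constant Hermitian holomorphic sectional curvature does not obviously force the third Gray curvature condition, so this does not follow from \cite{MR1921552}, and genuinely new --- Seiberg--Witten --- input is needed.

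The first step is pointwise. Decomposing the curvature $R^\nabla$ of the Hermitian connection under its holonomy group $U(2)$, the function $H$ detects only the ``K\"ahler-like'' (i.e.\ $J$-invariant, pair-symmetric) part of $R^\nabla$, and $H\equiv k$ forces that part to coincide with the model curvature tensor $R_k$ of a complex space form of holomorphic sectional curvature $k$. The remaining components of $R^\nabla$, and with them the difference $R^g-R^\nabla$ --- which by the structure equations for $\nabla$ is an explicit expression quadratic in $\nabla J$ together with one covariant-derivative term --- stay controlled by the Nijenhuis tensor. I want two outputs: that $(M,g)$ is \emph{self-dual}, $W^-=0$ for the orientation induced by $J$ (since $R_k$ carries no anti-self-dual Weyl part, and the $\nabla J$-corrections contribute none either), and explicit pointwise identities expressing $s$, $s^*$, $|W^+|^2$ and the trace-free Ricci tensor as functions of $k$ and $|\nabla J|^2$; obtaining clean such identities is where the hypothesis that $\mathrm{Ric}$ is $J$-invariant is used, eliminating the otherwise free anti-$J$-invariant Ricci component. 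I expect this to be the main obstacle: because $\nabla$ has torsion, the bookkeeping relating $R^\nabla$, $R^g$, the Nijenhuis tensor and $\nabla J$ in four dimensions is delicate, and one must isolate exactly the identities that survive the global argument.

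The second step is global. Since $dF=0$, $(M,F)$ is symplectic, so by Taubes' theorem $\pm c_1$ is a Seiberg--Witten monopole class; and since $k<0$ the identities of the first step make the Hermitian Ricci form negative, so that $c_1^2>0$ and $c_1\cdot[F]<0$ and $M$ behaves like a minimal surface of general type. Then LeBrun's Seiberg--Witten curvature estimate applies in the sharp form
\[
	c_1^2 \;\leq\; \frac{1}{72\pi^2}\int_M\bigl(s-\sqrt{6}\,|W^+|\bigr)^2\,dV,
\]
with equality if and only if $g$ is K\"ahler--Einstein. On the other side, self-duality makes the Chern--Weil integrands manageable: the signature formula gives $\int_M|W^+|^2\,dV=12\pi^2\tau$, and combining this with Gauss--Bonnet --- equivalently, evaluating $c_1^2$ by Chern--Weil from the Hermitian curvature --- expresses $c_1^2=2\chi+3\tau$ as an explicit integral of pointwise curvature. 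Feeding the first-step formulas for $s$ and $|W^+|$ into both, the Chern--Weil identity and the LeBrun inequality become two estimates for $\int_M|\nabla J|^2\,dV$ pointing in opposite directions, whose only common solution is $\int_M|\nabla J|^2\,dV=0$. Hence $\nabla J\equiv 0$, equality holds in LeBrun's estimate, $g$ is K\"ahler--Einstein, and the first paragraph concludes.

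Beyond routine computation, two points need care. One must check that LeBrun's estimate genuinely applies --- in particular that $c_1^+\neq 0$, deduced honestly from the negativity of the Hermitian Ricci form rather than assumed, and that in the borderline case $b^+=1$ one works in the symplectic chamber. And one must verify that the $|\nabla J|^2$-terms coming out of the first step enter the final comparison with the correct signs; this is precisely where $k<0$ and the $J$-invariance of $\mathrm{Ric}$ are consumed, and is why the negative case is weaker than Theorem~\ref{main-theorem}.
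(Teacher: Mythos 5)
Your overall architecture (pointwise step giving $W^-=0$ plus curvature identities, then a global integral argument) matches the paper's, but your global step for $k<0$ takes a genuinely different route -- Taubes plus LeBrun's Seiberg--Witten curvature estimates -- and that route has a concrete gap. You assert that $k<0$ makes the Hermitian Ricci form negative, hence $c_1\cdot[F]<0$ and $c_1^2>0$, so that $M$ "behaves like a minimal surface of general type". But the pointwise identities actually give $s_C=4k-2v$ with $v\le\frac{k}{2}$ and $k-2v=\frac14|A|^2$, i.e.\ $s_C=3k+\frac14|A|^2$: the non-integrability defect pushes the Hermitian scalar curvature \emph{up}, and nothing bounds $|A|^2$ a priori, so $\int_M c_1\cup[\omega]=\frac{1}{2\pi}\int_M s_C$ has no definite sign when $k<0$. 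This is precisely why the paper can run the Seiberg--Witten argument only for $k\ge 0$ (where $\int s_C>0$ is automatic unless $M$ is already K\"ahler) and must switch tools in the negative case. Relatedly, you place the use of the $J$-invariance of the Ricci tensor in the pointwise step, but the pointwise analysis (Theorem~\ref{thm:char}, Proposition~\ref{Einstein-Kaehler}, Lemma~\ref{integrability-criterion}) needs no such hypothesis; without a correct place to consume that assumption, your plan never actually uses it, which is a warning sign since the theorem is not claimed without it. Finally, the claim that LeBrun's estimate and the Chern--Weil identity give "two estimates for $\int|\nabla J|^2$ pointing in opposite directions" is not substantiated by any computation, and it is not clear it closes up.

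The paper's actual argument is much shorter and avoids Seiberg--Witten theory entirely in the negative case: $W^-=0$ from Theorem~\ref{thm:char} implies the metric is Bach-flat, so the Apostolov--Dr\u{a}ghici Weitzenb\"ock identity for almost K\"ahler metrics with $J$-invariant Ricci tensor applies,
\begin{equation*}
0=\int_M \Bigl( \tfrac{1}{2}|ds_g|^2-s_g\,|r_0|^2 \Bigr)\vol ,
\end{equation*}
and this is the one place where $J$-invariance of the Ricci tensor enters. The sign hypothesis $k<0$ is then consumed through $s_g=12v\le 6k<0$ (not through $s_C$), which makes both integrand terms non-negative, forcing $s_g$ constant and $r_0=0$; then $R_F=0$ and Proposition~\ref{Einstein-Kaehler} gives integrability, with the Einstein condition for free. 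If you want to salvage your approach you would need either to control the sign of $\int s_C$ independently or to replace the general-type/LeBrun machinery with an input that sees $s_g$ rather than $s_C$; as written, the proposal does not prove the theorem.
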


The proofs rely on the following pointwise result of independent interest, in which $M$ may be non-compact:

\begin{theorem}\label{thm:char}
Let $M$ be an almost Hermitian $4$-manifold. The holomorphic sectional curvature with respect to the Hermitian connection is constant $k$ at the point $p\in M$ if and only if at that point
\begin{enumerate}
\item
$W^-=0$,
\item
$\ast \rho = r$.
\end{enumerate}
\end{theorem}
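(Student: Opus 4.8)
The plan is to reduce the constancy of $H$ at $p$ to an identity on the curvature operator of the Hermitian connection $\nabla$ at $p$, and then to translate that identity into the Riemannian invariants $W^-$, $\rho$ and $r$ via the comparison between $\nabla$ and the Levi-Civita connection $D^g$. Since $\nabla$ is Hermitian, $R^\nabla_{X,Y}\in\mathfrak u(T_pM)$, so with the orientation for which $F$ is self-dual the curvature operator $\mathcal R^\nabla\colon\Lambda^2\to\Lambda^2$ (normalised by $\langle\mathcal R^\nabla(e^i\wedge e^j),e^k\wedge e^l\rangle=R^\nabla_{e_i,e_j,e_k,e_l}$) has image in $\mathbb RF\oplus\Lambda^-$, where $\Lambda^+=\mathbb RF\oplus\llbracket\Lambda^{2,0}\rrbracket$ and $\Lambda^-$ is the bundle of primitive real $(1,1)$-forms. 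The value $H(X)$ depends only on the $J$-invariant plane $\mathrm{span}(X,JX)$, equivalently on the associated unit $(1,1)$-form $\omega_X$; these unit forms are exactly $\omega_\psi=\tfrac12(F+\psi)$ with $\psi\in\Lambda^-_p$ and $|\psi|^2=2$ (the sphere $\{|\psi|^2=2\}\cong\mathbb{C}P^1$ being the space of $J$-invariant planes at $p$), and $H=-\langle\mathcal R^\nabla\omega_\psi,\omega_\psi\rangle$.

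The algebraic heart is the following. Writing $\mathcal R^\nabla_{\mathrm{sym}}$ for the symmetric part of $\mathcal R^\nabla$ — all that enters the quadratic form — one has
\[
-4H=\langle\mathcal R^\nabla_{\mathrm{sym}}F,F\rangle+2\langle\mathcal R^\nabla_{\mathrm{sym}}F,\psi\rangle+\langle\mathcal R^\nabla_{\mathrm{sym}}\psi,\psi\rangle,\qquad |\psi|^2=2 .
\]
The involution $\psi\mapsto-\psi$ preserves the sphere and the quadratic term but flips the linear one, so $H$ is constant at $p$ if and only if (a) $\pi_{\Lambda^-}(\mathcal R^\nabla_{\mathrm{sym}}F)=0$, and (b) $\pi_{\Lambda^-}\circ\mathcal R^\nabla_{\mathrm{sym}}|_{\Lambda^-}$ is a multiple of the identity; the value $k$ is then read off from $\langle\mathcal R^\nabla_{\mathrm{sym}}F,F\rangle$ and that multiple.

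It remains to identify (a) with $\ast\rho=r$ and (b) with $W^-=0$. Write $\nabla=D^g+\Phi$, $\Phi_X=-\tfrac12 J(D^g_XJ)$, which is valued in $\llbracket\Lambda^{2,0}\rrbracket\subset\Lambda^+$ (and $\Phi_v\in\llbracket\Lambda^{2,0}\rrbracket$ for every tangent vector $v$). The curvature comparison takes the form
\[
R^\nabla_{X,Y}=R^g_{X,Y}+(\nabla_X\Phi)_Y-(\nabla_Y\Phi)_X-[\Phi_X,\Phi_Y]+\Phi_{\Phi_XY}-\Phi_{\Phi_YX},
\]
and the crucial point is that the correction is supported in $\llbracket\Lambda^{2,0}\rrbracket\oplus\mathbb RF$: the $(\nabla\Phi)$- and $\Phi_{\Phi_XY}$-terms are $\llbracket\Lambda^{2,0}\rrbracket$-valued, since $\nabla$ preserves $\llbracket\Lambda^{2,0}\rrbracket$; and $[\Phi_X,\Phi_Y]\in\mathbb RF$ because $\llbracket\Lambda^{2,0}\rrbracket$ is the orthogonal complement of $\mathbb RF$ in $\mathfrak{su}(2)_+\cong\Lambda^+$ and $[\llbracket\Lambda^{2,0}\rrbracket,\llbracket\Lambda^{2,0}\rrbracket]=\mathbb RF$. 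Hence $\pi_{\Lambda^-}\circ\mathcal R^\nabla=\pi_{\Lambda^-}\circ\mathcal R^g$; in particular $\pi_{\Lambda^-}\circ\mathcal R^\nabla_{\mathrm{sym}}|_{\Lambda^-}=\pi_{\Lambda^-}\circ\mathcal R^g|_{\Lambda^-}=W^-+\tfrac s{12}\id$, so (b) becomes precisely $W^-=0$. For (a) one uses the same support statement, the first Bianchi identity for $R^g$ (rewriting $\pi_{\Lambda^-}(\mathcal R^gF)$ as the anti-self-dual part of a Ricci form), and the expression of the $\mathbb RF$-valued correction in terms of the Ricci forms of $\nabla$ and of $g$, to turn $\pi_{\Lambda^-}(\mathcal R^\nabla_{\mathrm{sym}}F)=0$ into the stated identity $\ast\rho=r$.

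The algebraic step is routine, so the real work — and the main obstacle — is this last translation. Two points need care: writing the curvature comparison in a form where the self-dual nature of the $\Phi$-corrections is visible, which forces the use of $\nabla$- rather than $D^g$-covariant derivatives of $\Phi$ (as $D^g$ does not preserve $\llbracket\Lambda^{2,0}\rrbracket$) together with the torsion terms $\Phi_{\Phi_XY}$; and matching the $\Lambda^-$-component of $\mathcal R^\nabla_{\mathrm{sym}}F$ against $\ast\rho-r$ in the precise normalisations used for $\rho$ and $r$ — in particular checking that the self-dual parts of $\ast\rho$ and of $r$ agree automatically, so that (a) really amounts to the three scalar equations packaged in $\ast\rho=r$.
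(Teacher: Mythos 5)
Your proposal is correct and follows the same route as the paper's proof, only packaged more invariantly. The curvature comparison $R^\nabla_{X,Y}=R^g_{X,Y}+(\nabla_X\Phi)_Y-(\nabla_Y\Phi)_X-[\Phi_X,\Phi_Y]+\Phi_{\Phi_XY}-\Phi_{\Phi_YX}$ is right, and your observation that the correction is valued (in the endomorphism slot) in $\llbracket\Lambda^{2,0}\rrbracket\oplus\R F\subset\Lambda^+\otimes\C$ --- via $\Phi_v\in\llbracket\Lambda^{2,0}\rrbracket$, $\nabla$-parallelism of that bundle, and the $\mathfrak{su}(2)$-bracket argument for $[\Phi_X,\Phi_Y]\in\R F$ --- is exactly the content of the paper's Lemma~\ref{lem:curv-restrict} (there the commutator term is shown to land in $\C F$ by the explicit formula \eqref{explicit-beta} rather than by the cross-product structure of $\mathfrak{su}(2)_+$). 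Your parity-on-the-sphere argument, using $\omega_\psi=\tfrac12(F+\psi)$ with $|\psi|^2=2$ and the involution $\psi\mapsto-\psi$, is a cleaner, coordinate-free substitute for the paper's Lemma~\ref{holLemma-curv}, which extracts the same two conditions by expanding $R^\nabla(Z,\bar Z,Z,\bar Z)$ as a polynomial in $x,\bar x,y,\bar y$ and comparing coefficients. Condition (b) then gives $W^-=0$ immediately since $W^-$ is trace-free, as you say.

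The one step you leave unfinished --- and flag as ``the main obstacle'' --- is in fact the easiest part, and the tools you propose for it (the first Bianchi identity for $R^g$, and re-expressing the $\R F$-valued correction through Ricci forms) are unnecessary detours: no comparison with $R^g$ enters at all. By definition \eqref{def:three-ricci-forms}, $\rho$ and $r$ are the contractions of $R^\nabla$ \emph{itself} against $F$ in the second and first index pairs respectively, so for $\tau\in\Lambda^-$ one has $\langle r,\tau\rangle=c\,R^\nabla(F,\tau)$ and $\langle\rho,\tau\rangle=c\,R^\nabla(\tau,F)$ with one and the same universal constant $c$; hence your condition (a), namely $\pi_{\Lambda^-}(\mathcal R^\nabla_{\mathrm{sym}}F)=0$, is literally $\rho^-+r^-=0$. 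Both $\rho$ and $r$ are $(1,1)$-forms with the same trace $\Lambda\rho=\Lambda r=s_C$ (both equal the double trace $R\indices{^\nabla_\alpha^\alpha_\gamma^\gamma}$), so their self-dual parts agree automatically and $\ast\rho=r$ reduces to $\rho^-=-r^-$, i.e.\ to (a). This is exactly the coordinate computation leading to \eqref{forms-dual} in the paper. With that observation inserted, your proof is complete.
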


Condition ii) may also be expressed using the (Riemannian) Ricci tensor, see Proposition~\ref{Einstein-Kaehler} (we refer to~\cite{MR793346} in the Hermitian case).
%As a by-product, we will also obtain the well-known formula $\frac{s_* - s^\nabla_H}{4} = \frac12|A|^2$ in the case that $M$ is $(2,1)$-symplectic (see~\cite{MR1782093,MR867684}). Hence $J$ is integrable if and only if
%\begin{equation}
%	\int s_*\vol=\int s^\nabla_H\vol.
%\end{equation}
Hence in proving Theorems~\ref{main-theorem}, \ref{main-theorem-negative} we may restrict attention to \emph{self-dual manifolds}, meaning $W^-=0$. Their classification is an old and in general still open problem, but under additional assumptions many results have been obtained. See \cite{ MR1956815,MR1348147,MR867684, MR861766} for results and further overview. Our main theorems can also be regarded in this way.

\subsection{Strategy of proof}

The first step is to reformulate constant Hermitian holomorphic sectional curvature in terms of the Riemannian curvature tensor (Theorem~\ref{thm:char}). This an algebraic argument at a point, based on the decomposition of the Riemannian curvature tensor in dimension $4$ and the explicit nature of the gauge potential in \eqref{canonical-connection}. In some sense, the assumption of constant curvature is played off against the symmetries of the Riemannian curvature tensor.  This is carried out in Section~\ref{sec:self-dual}, after having recalled some preliminaries in the next section.

The next step is then in Section~\ref{sec:21} to improve in the almost K\"ahler case our understanding of the Hermitian curvature tensor. It is remarkable that in \eqref{fullChernDecomposition} we obtain information on the full curvature tensor, even though our assumptions depend only upon its $(1,1)$-part.

Up to this point our arguments are mostly algebraic. To proceed, we must exploit consequences of the differential Bianchi identity. Thus in Section~\ref{sec:integral-formulas} we formulate the index theorems for the signature and the Euler characteristic using Chern--Weil theory. Applied to the Levi-Civita and the Hermitian connection, we obtain further information \eqref{Chern-Weil-Conclusion}, \eqref{chi}, \eqref{sigma}.

The formulas are then used in Section~\ref{sec:proofs} to show K\"ahlerness under further topological restrictions. Finally, combined with deep results from Seiberg--Witten theory, these results imply Theorem~\ref{main-theorem} in the case $k\geq 0$. %Note that $k\geq 0$ does not obviously imply non-negative Riemannian scalar curvature.

Theorem~\ref{main-theorem-negative} ($k<0$) follows by combining our results with formulas for the Bach tensor obtained in \cite{MR1782093}. These formulas require the Ricci tensor to be $J$-invariant.
It is well possible that this additional assumption in Theorem~\ref{main-theorem-negative} may be removed.

\section{Preliminaries}

\subsection{Conventions}

Throughout let $(M,J,g,F)$ be an almost Hermitian $4$-manifold.
Thus ${J\colon TM \to TM}$ is an almost complex structure, $g$ is
a Riemannian metric for which $J$ is orthogonal, and $F=g(J\cdot,\cdot)$.
Later we will also assume $dF=0$ so that we have
an almost K\"ahler structure. Recall that an almost Hermitian manifold is K\"ahler precisely when $J$ is parallel for 
the Levi-Civita connection $D^g J=0$.

Let $(z_1,z_2)$ be a local orthonormal frame of $T^{1,0}M$ for the induced Hermitian metric $h(Z,W)=g_\C(Z,\bar{W})$ on $TM\otimes \C= T^{1,0}M \oplus T^{0,1}M$ split in the usual fashion. Using the dual frame,
%%%%%%
% THE REAL PICTURE. 
%\Marginpar{More common to not to normalize the $z_i$, namely to use $\frac{1}{\sqrt{2}}z_i$ and $\sqrt{2}z^i$}
%We write the real components
%\begin{align}
%z_k &= \frac{e_k - iJe_k}{\sqrt{2}}.
%\end{align}
%This gives a positively oriented real orthonormal frame $(e_1, Je_1, \ldots, e_n, Je_n)$, with dual frame $(e^1,Je^1,e^2,Je^2)$.\Marginpar{This means that $J$ acts by $-J^*$ on 1-forms. Note the unusual numbering of the basis vectors}
%Hence
%\begin{align}
%z^1 &= \frac{e^1 + iJe^1}{\sqrt{2}}=h(\cdot ,z_1),
%&z^2 &= \frac{e^2 + iJe^2}{\sqrt{2}} = h(\cdot, z_2).
%\end{align}
%%%%%
 the fundamental form is $F=i(z^1\bar{z}^{\bar{1}} + z^2\bar{z}^{\bar{2}})$.% = e^1 \wedge Je^1 + e^2\wedge Je^2.
\Marginpar{$\vol=\frac{F^2}{2}=e^1\wedge Je^1 \wedge e^2\wedge Je^2 = z^{12}\bar{z}^{\overline{12}}$}
All tensors are extended complex linearly and we adopt the summation convention.

\subsection{Two-forms on $4$-manifolds}

The Hodge operator decomposes the bundle of two-forms into the self-dual and anti-self-dual parts
\begin{equation}\label{2formsSDASD}
	\Lambda^2=\Lambda^+ \oplus \Lambda^-.
\end{equation}
For the structure group $U(2) \subset SO(4)$ we may split further
\Marginpar{Also $\Lambda_0^{1,1}\cap \Lambda^2_\R = \Lambda^-$ and similarly for $\Lambda^+$. $\ast z^{1\bar{1}}=z^{2\bar{2}}$, $\ast z^{1\bar{2}}=-z^{1\bar{2}}$}
\begin{align}\label{dual-and-selfdual}
\Lambda^+\otimes \C &= \Lambda^{2,0} \oplus \Lambda^{0,2}\oplus \C\cdot F,
&\Lambda^-\otimes \C &= \Lambda^{1,1}_0.
\end{align}
%It is defined as
%\[
%	\alpha\wedge \ast \bar\beta = h(\alpha,\beta)\vol
%\]
%and hence restricts to the ordinary Hodge operator on real $2$-forms. To prove the decomposition
%it suffices hence to check that the right hand sides have a basis of self-dual (anti-self-dual) forms for the complex Hodge operator.
Here $\Lambda^{1,1}_0$ stands for complex $(1,1)$-forms
pointwise orthogonal to $F$.

\subsection{Gauge potential}

The gauge potential ${A=\nabla - D^g}$ of the canonical connection \eqref{canonical-connection} with respect to the Levi-Civita connection $D^g$ is complex anti-linear
\begin{equation}\label{A-cplx-antilinear}
A_X\circ J = -J\circ A_X.
\end{equation}
In dimension four, the almost K\"ahler condition is equivalent to
\begin{equation}\label{def:21symplectic}
A_{JX}=-J\circ A_X.
\end{equation}
Note that $M$ is K\"ahler $\iff A=0$.

%When $\Lambda^2$ is a direct sum, we may decompose this bilinear form into a matrix of bilinear forms, where the first entry will correspond to the rows of the matrix. When working in a basis we use representing matrices of bilinear forms.
%The restriction of the metric on $\Lambda^2$ to any subspace is still denoted $g$.

%When $M$ is almost K\"ahler, the canonical connection coincides with the Chern connection, whose torsion is the Nijenhuis tensor $T=N$.

\subsection{Curvature decomposition}
\renewcommand{\kbldelim}{(}\renewcommand{\kbrdelim}{)}
\setlength{\kbcolsep}{-5pt}

Regard the \emph{Hermitian curvature} (and similarly the Riemannian curvature) as a bilinear form on $\Lambda^2$, grouping $XY$ and $ZW$, by
\begin{equation}\label{Rconvention}
	R^\nabla_{XYZW} \coloneqq g([\nabla_X,\nabla_Y] Z - \nabla_{[X,Y]}Z,W).
\end{equation}
When we decompose $\Lambda^2$ into direct summands, we get a corresponding decomposition of $R^\nabla$ into a matrix of bilinear forms, where the first entry corresponds to the rows. The representing matrix of a bilinear form with respect to a basis (one-dimensional summands) will be indicated by `$\equiv$'.

All of the algebraic properties of the \emph{Riemannian} curvature tensor $R^g$ are summarized in the following representation with respect to \eqref{2formsSDASD} and \eqref{dual-and-selfdual}, see \cite{MR1604803,MR867684}:
\begin{equation}\label{LC-decomposition}
\begin{aligned}
-R^g&=
 \kbordermatrix{&\Lambda^+&  &\Lambda^-\\
 & W^+ + \frac{s_g}{12}g & \vrule & R_0\\\cline{2-4}
 &R_0^T&\vrule&W^- + \frac{s_g}{12}g
 }\\
 &=
 \kbordermatrix{&\C F&\Lambda^{2,0}\oplus\Lambda^{0,2}&&\Lambda_0^{1,1}\\
&d\cdot g&W_F^+	&\vrule&	R_F\\
 & (W_F^+)^T&W_{00}^+ + \frac{c}{2}g	&\vrule&	R_{00}\\
\cline{2-5}
&	R_F^T&R_{00}^T 				&\vrule&W^- + \frac{s_g}{12}g
}
\end{aligned}
\end{equation}
%\begin{equation}
%R^g = \left(\begin{array}{c|c}
%W^+ + \frac{s_g}{12}g & R_0\\\hline
%R_0^* & W^- + \frac{s_g}{12}g
%\end{array}\right)
%=
%\left(\begin{array}{cc|c}
%W_{00}^+ + \frac{b}{2}I & (W_\omega^+)^* & R_{00}\\
%W_\omega^+ & a & R_{\omega}\\\hline
%R_{00}^*  & R_F^* & W^- + \frac{s_g}{12}I
%\end{array}\right)
%\end{equation}
%
Here $W^\pm$ are the \emph{Weyl curvatures}, trace-free symmetric bilinear forms on $\Lambda^\pm$ and $s_g$ denotes the Riemannian scalar curvature. The \emph{$*$-scalar curvature} is
\begin{align}\label{def:starScalar}
s_*&\coloneqq 4d,
&\frac{s_g}{4}&=c+d.
\end{align}
Moreover $R_0 \colon \Lambda^+ \otimes \Lambda^- \to \R$ corresponds to the \emph{trace-free Riemannian Ricci tensor} $r_0$, namely $R_0(\cdot)=\frac12 \{r_0,\cdot\}$ is the anti-commutator, see \cite[(A.1.8)]{gauduchon2010calabi}. Finally $R_0^T(x,y)\coloneqq R_0(y,x)$\Marginpar{I.e.~the transpose matrix. In the complexified equations below, it is also the transpose, without conjugation}, 
and $R_{00}, R_F$ denote further restrictions in the first argument. We take the tensorial norm for bilinear forms, even when they are symmetric.% (some authors include a factor of one-half).

\subsection{Ricci forms}

Having torsion, the curvature tensor of the canonical connection has fewer symmetries than the Riemannian one; for example the algebraic Bianchi identity no longer holds. By contracting indices we now obtain two \emph{Ricci forms}
\begin{equation}\label{def:three-ricci-forms}
\begin{aligned}
\rho &\coloneqq i R\indices{^\nabla_{\alpha\bar\beta\gamma}^\gamma} z^\alpha\wedge \bar{z}^{\bar\beta},\\
r &\coloneqq  iR\indices{^\nabla_\gamma^\gamma_{\lambda\bar\mu}} z^\lambda\wedge\bar{z}^{\bar\mu}.
%\sigma^\nabla &\coloneqq iR\indices{^\nabla_{\bar\mu\alpha}^\alpha_\lambda} z^\lambda\wedge \bar{z}^{\bar\mu}
\end{aligned}
\end{equation}
The \emph{Chern} and \emph{Hermitian scalar curvatures} are obtained by a further trace
\begin{equation}
\begin{aligned}
s_C &\coloneqq \Lambda(\rho) = \Lambda(r) = R\indices{^\nabla_\alpha^\alpha_\gamma^\gamma}\label{sC},  &s_H &\coloneqq 2s_C.\\
%s^\nabla &\coloneqq \Lambda(\sigma) = R\indices{^\nabla_\alpha^\beta_\beta^\alpha}.
\end{aligned}
\end{equation}
%When $M$ is almost K\"ahler, the first canonical and the Chern connection coincide.
In the K\"ahler case, the canonical and the Levi-Civita connection agree so that both forms in \eqref{def:three-ricci-forms} are equal to the usual Ricci form.

\subsection{Holomorphic sectional curvature}

% The length is with respect to the Hermitian form
For $Z\in T^{1,0}M$ the holomorphic sectional curvature is defined from the $(1,1)$-part of the curvature as
\begin{equation}\label{sectional-curvature}
H(Z) = \frac{R^\nabla_{Z,\bar{Z},Z,\bar{Z}}}{h(Z,Z)h(Z,Z)}.
\end{equation}
%Even in the K\"ahler case this is weaker than constant Riemannian sectional curvature, because we only consider planes spanned by X and J(X).

The holomorphic sectional curvature is constant at the point $p\in M$ if \eqref{sectional-curvature} is a constant $k(p)$ for all $Z\in T^{1,0}_pM$. We say it is \emph{pointwise constant} if $H$ is constant at each point of $M$. If the constant $k$ is the same at every point $p\in M$ we speak of \emph{globally constant} holomorphic sectional curvature.

Note that the Hermitian connection is always understood.

\section{Relation to Self-dual Manifolds}\label{sec:self-dual}

In this section we will prove Theorem~\ref{thm:char}.

\subsection{Preparatory lemmas}
Being the complexification of a real tensor, the $(1,1)$-part of the curvature has the following form in the basis $(z_{1\bar{1}}, z_{1\bar{2}}, z_{2\bar{1}}, z_{2\bar{2}})$:
\begin{equation}\label{Rbasis}
R^\nabla|_{\Lambda^{1,1}\otimes \Lambda^{1,1}}\equiv\begin{pmatrix}
k & \bar{a} & a & w\\
\bar{a'} & \bar{x} & \bar{v} & \bar{b}\\
a' & v & x & b\\
u & \bar{b'} & b' & l
\end{pmatrix},\quad k,l, u, w \in \R.
\end{equation}

We first show that the $(1,1)$-part of $R^\nabla$ is automatically restricted further.

\begin{lemma}\label{lem:curv-restrict}
Let $M$ be an almost Hermitian $4$-manifold. With respect to the decomposition $\Lambda^{1,1}=\C F\oplus \Lambda_0^{1,1}$ we have
\begin{equation}\label{curv-restricted}
R^\nabla|_{\Lambda^{1,1}\otimes \Lambda^{1,1}} =
-\kbordermatrix{ & \C F& \Lambda_0^{1,1}\\
&\frac{s_C}{2}g & R_F\\
&R_F^T+\beta_0 & W^- + \frac{s_g}{12}g
}
\end{equation}
for some $\beta_0 \colon \Lambda_0^{1,1} \otimes \C F \to \C$. Moreover $a+b=a'+b'$ and $v=\frac{s_g}{12}$ in \eqref{Rbasis}.
\end{lemma}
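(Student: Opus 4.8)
\textit{Plan of proof.} Write $\nabla = D^g + A$ with $A$ the gauge potential. As $\nabla$ and $D^g$ are both metric, each $A_X$ is $g$-skew-symmetric, and by \eqref{A-cplx-antilinear} it anti-commutes with $J$; the $g$-skew, $J$-anti-linear endomorphisms are precisely $(\Lambda^{2,0}\oplus\Lambda^{0,2})_{\R}$, which is the orthogonal complement of $\R F$ inside $\Lambda^+$. Hence $A$ is a $1$-form with values in $\Lambda^+\subset\mathfrak{so}(TM)$. In the curvature comparison
\[
R^\nabla = R^g + d^{D^g}A + [A\wedge A],
\]
the term $d^{D^g}A$ is again $\Lambda^+$-valued (since $D^g$ preserves the Hodge star, hence the subbundle $\Lambda^+$), while $[A_X,A_Y]$ is a bracket of two elements of $\mathfrak m := (\R F)^{\perp}\cap\Lambda^+$, so $[A_X,A_Y]\in[\mathfrak m,\mathfrak m]=\R F$ (the bracket in the simple algebra $\Lambda^+\cong\mathfrak{su}(2)$ is orthogonal to its two factors). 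Therefore $R^\nabla - R^g$ is $\Lambda^+$-valued, and in particular the $\Lambda^-=\Lambda_0^{1,1}$-component of the \emph{second} index pair of $R^\nabla$ coincides with that of $R^g$.

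Now \eqref{curv-restricted} follows. Since $\nabla J = 0$, the endomorphism $R^\nabla(X,Y)$ commutes with $J$, so the second index pair of $R^\nabla$ always lies in $\Lambda^{1,1}=\C F\oplus\Lambda_0^{1,1}$, and it remains to identify the two ``columns'' $\C F$ and $\Lambda_0^{1,1}$. On the $\Lambda_0^{1,1}$-column the previous step gives agreement with $R^g$, so \eqref{LC-decomposition} produces exactly the entries $-R_F$ (in the $\C F$-row) and $-(W^-+\tfrac{s_g}{12}g)$ (in the $\Lambda_0^{1,1}$-row). On the $\C F$-column, the fact that $F$ corresponds to the endomorphism $J$, combined with the definitions \eqref{def:three-ricci-forms}, \eqref{sC} of the Ricci forms and the Chern scalar curvature, gives $R^\nabla(F,F) = -\tfrac{s_C}{2}|F|^2$, i.e.\ the $\C F$-row entry $-\tfrac{s_C}{2}g$; the remaining $\Lambda_0^{1,1}$-row entry is then merely \emph{named} $-(R_F^T+\beta_0)$, with $\beta_0$ the a priori unrestricted deviation from pair-exchange symmetry, which the torsion destroys. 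Finally, to extract the scalar identities of \eqref{Rbasis}, expand $F\propto z_{1\bar{1}}+z_{2\bar{2}}$ and $\Lambda_0^{1,1}=\langle z_{1\bar{1}}-z_{2\bar{2}},\,z_{1\bar{2}},\,z_{2\bar{1}}\rangle$: because $z_{1\bar{1}}$ and $z_{2\bar{2}}$ carry the same $\C F$-component, the torsion correction cancels in the comparison of $a+b$ with $a'+b'$, leaving a symmetric expression in $R^g$ and hence $a+b=a'+b'$; and the entry $v$ sits in the $(\Lambda_0^{1,1},\Lambda_0^{1,1})$-block, which is a block of the symmetric tensor $R^g$, so $v$ is real, and evaluating $-(W^-+\tfrac{s_g}{12}g)$ in that slot yields $v=\tfrac{s_g}{12}$.

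The one point requiring real care is the bookkeeping: the second index pair of $R^\nabla$ is constrained (to $\Lambda^{1,1}$, with its $\Lambda^-$-part pinned to $R^g$) whereas the first index pair is not, so \eqref{curv-restricted} is genuinely non-symmetric and must not be symmetrized; one must also keep the conjugation relations built into \eqref{Rbasis} straight when passing to the scalar statements. Everything else is a short computation in which I foresee no difficulty.
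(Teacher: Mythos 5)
Your derivation of the block form \eqref{curv-restricted} is correct and takes a genuinely different route from the paper's. Where the paper computes $\beta=g([A_X,A_Y]Z,W)$ explicitly in indices \eqref{explicit-beta} to see that its second index pair lies in $\C F$, you obtain the same conclusion structurally from $A_X\in(\R F)^\perp\cap\Lambda^+$ together with $[\mathfrak{m},\mathfrak{m}]=\mathfrak{m}^\perp$ in $\Lambda^+\cong\mathfrak{su}(2)$ --- a cleaner argument. Where the paper shows the first-order term $\alpha$ lies in $\Lambda^2\otimes(\Lambda^{2,0}\oplus\Lambda^{0,2})$ (using that the \emph{Hermitian} connection preserves $J$), you only claim that $d^{D^g}A$ is $\Lambda^+$-valued; this is weaker, but it suffices because you pin only the $\Lambda_0^{1,1}$-column to $R^g$ (where $\Lambda^+\cap\Lambda^-\otimes\C=0$) and treat the $\C F$-column separately ($s_C$ by definition, $\beta_0$ by naming --- legitimate for this lemma, though note that the later computation \eqref{betanorms} needs $\beta_0$ to be exactly the $[A,A]$-contribution, which your weaker control of $d^{D^g}A$ does not yet give). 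Your derivations of $a+b=a'+b'$ and $v\in\R$ from the symmetry of the $(\Lambda_0^{1,1},\Lambda_0^{1,1})$-block are sound and agree in substance with the paper's.

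The final step, however, is a genuine gap: evaluating $-(W^-+\tfrac{s_g}{12}g)$ in the $(z_{2\bar 1},z_{1\bar 2})$-slot does \emph{not} yield $\tfrac{s_g}{12}$. Since $g(z_{2\bar1},z_{1\bar2})=-1$ and, by trace-freeness of $W^-$ on $\Lambda_0^{1,1}$ with basis $(z_{1\bar2},z_{2\bar1},F^-)$ where $F^-\coloneqq\tfrac{i}{\sqrt2}(z_{1\bar1}-z_{2\bar2})$, one has $W^-(z_{2\bar1},z_{1\bar2})=\tfrac12W^-(F^-,F^-)$, the slot actually gives $v=\tfrac{s_g}{12}-\tfrac12W^-(F^-,F^-)$. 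The extra term has no reason to vanish on a general almost Hermitian $4$-manifold: for the K\"ahler product $S^2\times S^2$ a direct computation gives $v=R^g_{z_2\bar z_1 z_1\bar z_2}=0$ while $s_g\neq 0$. So $v=\tfrac{s_g}{12}$ requires $W^-(F^-,F^-)=0$ (in particular it holds when $W^-=0$). In fairness, the paper's own proof makes the same leap --- symmetry of $W^-+\tfrac{s_g}{12}g$ only yields $v\in\R$ and $a+b=a'+b'$ --- and nothing downstream breaks because \eqref{sgv} is only ever invoked under constant holomorphic sectional curvature, where $W^-=0$ by Theorem~\ref{thm:char}. Still, as written your justification of $v=\tfrac{s_g}{12}$ does not go through; you should either add the hypothesis $W^-=0$ at that point or record the corrected identity $v=\tfrac{s_g}{12}-\tfrac12W^-(F^-,F^-)$.
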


\begin{proof}
By direct calculation (or see \cite[p.~25]{MR867684})
\[
R^\nabla_{XYZW} = R^g_{XYZW} + \underset{\alpha\coloneqq}{\underbrace{g((\nabla_X A_Y - \nabla_Y A_X - A_{[X,Y]}) Z, W)}} - \underset{\beta\coloneqq}{\underbrace{ g([A_X, A_Y]Z,W)}}.
% Here d^\nabla A_{XY} is the exterior covariant derivative, i.e. = \nabla_X A_Y - \nabla_Y A_X - A_{[X,Y]}
\]
By \eqref{A-cplx-antilinear}, $\alpha \in \Lambda^2 \otimes \Lambda^{2,0+0,2}$,
so only $\beta$ will contribute to the restriction of $R^\nabla$. % Note that $\nabla$ preserves $J$
Since $[A_X,A_Y]$ is complex-linear, $\beta \in \Lambda^2\otimes \Lambda^{1,1}$.
On $\Lambda^{1,1}$ consider the orthonormal basis
\begin{align}\label{ONB}
\left(\frac{i}{\sqrt{2}}(z_{1\bar{1}}+z_{2\bar{2}}),
z_{1\bar{2}},
z_{2\bar{1}},
\frac{i}{\sqrt{2}}(z_{1\bar{1}}-z_{2\bar{2}})\right).
\end{align}
%%%%%
%Consider the following orthonormal bases of $\Lambda^+$ and $\Lambda^-$
%\begin{align}\label{ONB}
%&\left(z_{12}, \bar{z}_{\overline{12}},\omega \coloneqq \frac{i}{\sqrt{2}}(z_{1\bar{1}}+z_{2\bar{2}})\right),
%&\left(z_{1\bar{2}}, z_{2\bar{1}}, \frac{i}{\sqrt{2}}(z_{1\bar{1}}-z_{2\bar{2}})\right).
%\end{align}
%%%%%
In dimension $4$, $\beta$ is in fact restricted to $\Lambda^2 \otimes \C F$. Indeed, the explicit formula
\begin{equation}\label{explicit-beta}
	\beta(X,Y,z_\alpha,\bar{z}_{\bar\beta}) %= -A\indices{_{[X}^\gamma_{\bar\beta}}A\indices{_{Y]\alpha\gamma}}
	= A\indices{_{[X\alpha\gamma}}A\indices{_{Y]\bar\beta}^\gamma}
\end{equation}
shows $\beta_{XY1\bar{2}}=\beta_{XY2\bar{1}}=\beta_{XY1\bar{1}}-\beta_{XY2\bar{2}}=0$ (here `$[\,]$' denotes anti-symmetrization): % Write down explicitly the sum over $\gamma$ to see this.
\[
	\beta|_{\Lambda^{1,1}\otimes \Lambda^{1,1}} = \kbordermatrix{
	& \C F & \Lambda_0^{1,1}\\
	&\tilde{\beta}\cdot g & 0\\
	&\beta_0 & 0
	}.
\]
Complexifying and restricting to $\Lambda^{1,1}\otimes\Lambda^{1,1}$ we hence have
\begin{equation}\label{Rcorrection}
	R^\nabla=R^g + \alpha - \beta
	=
	-\kbordermatrix{
	& \C F & \Lambda_0^{1,1}\\
	& d\cdot g_\C & R_F\\
	&R_F^T & W^- + \frac{s_g}{12}g_\C
	}+0
	-
	\kbordermatrix{
	& \C F & \Lambda_0^{1,1}\\
	&\tilde{\beta}\cdot g_\C & 0\\
	&\beta_0 & 0
	}.
\end{equation}
Note that the upper left corner of \eqref{curv-restricted} is $-s_C/2$ by definition \eqref{sC}.
A change of basis shows that the bilinear form \eqref{Rbasis} is represented in the basis \eqref{ONB} by
\begin{equation}\label{Rnewbasis}
R^\nabla|_{\Lambda^{1,1}\otimes \Lambda^{1,1}} \equiv \begin{pmatrix}
\frac{-k-l-u-w}{2} & \frac{i}{\sqrt{2}}(\overline{a+b'}) & \frac{i}{\sqrt{2}}(a+b') & \frac{-k+l-u+w}{2}\\
\frac{i}{\sqrt{2}}(\overline{b+a'}) & \bar{x} & \bar{v} & \frac{i}{\sqrt{2}}(\overline{a'-b})\\
\frac{i}{\sqrt{2}}(b+a') & v & x & \frac{i}{\sqrt{2}}(a'-b)\\
\frac{-k+l+u-w}{2} & \frac{i}{\sqrt{2}}(\overline{a-b'}) & \frac{i}{\sqrt{2}}(a-b') & \frac{-k-l+u+w}{2}
\end{pmatrix}.
\end{equation}
In this basis \eqref{ONB} the inner product on $2$-forms has the matrix
\[
g_\C \equiv \begin{pmatrix}
1\\
&&-1\\
&-1\\
&&&1
\end{pmatrix},
\]
so by comparing \eqref{Rcorrection} and \eqref{Rnewbasis} and using that $W^-+\frac{s_g}{12}g$ is symmetric we get
\[
a+b = a'+ b',\quad
v=\frac{s_g}{12} \in \R.
\]
For later use we note that in this basis the condition $W^-=0$ means that the lower right $3\times3$-submatrix
of \eqref{Rnewbasis} reduces to
\[
	\begin{pmatrix}
&\frac{s_g}{12}\\
\frac{s_g}{12}\\
&&-\frac{s_g}{12}
\end{pmatrix}.
\]
In other words, it means $x=0, a=b', u+2v+w=k+l$.
%$
%g_\C = \begin{pmatrix}
%1\\
%&&-1\\
%&-1\\
%&&&1
%\end{pmatrix}
%$
%Also $\beta\colon \Lambda^2(TM) \otimes \Lambda^2(TM) \to \R$ is symmetric if
%and only if $\beta_\C$ is symmetric. In general the trace is given by
%\[
%	tr(\beta)=\sum \beta(e_i, e_i) = \frac12 \sum (\beta(v_i, \bar{v}_i)+\beta(\bar{v}_i,v_i)),
%\]
%where $v_i$ denotes \emph{any} orthonormal basis of $V_\C$ (not just the $(1,0)$-part). Hence
%$g_\C$ above has trace $+1$.
\end{proof}

\begin{lemma}\label{holLemma-curv}
The holomorphic sectional curvature is constant $k$ at a point if and only if \eqref{Rbasis} reduces at that point to
\begin{equation}\label{ChernR-balas}
R^\nabla|_{\Lambda^{1,1}\otimes \Lambda^{1,1}}\equiv \begin{pmatrix}
k & \bar{a} & a & w\\
-\bar{a} & 0 & \bar{v} & \bar{b}\\
-a & v & 0  & b\\
u & -\bar{b} & -b & k
\end{pmatrix}\quad\text{with}\quad
u+v+\bar{v}+w=2k.
\end{equation}
%\begin{align}
%a'&=-a
%&b'&=-b
%&k&=l
%&x&=0
%&u+v+\bar{v}+w&=2k
%\end{align}
\end{lemma}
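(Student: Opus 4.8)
\textbf{Proof plan for Lemma~\ref{holLemma-curv}.}
The strategy is to unwind the definition of constant holomorphic sectional curvature directly in terms of the entries of the matrix \eqref{Rbasis}, exploiting the polarization identity for $H$. First I would observe that $H(Z)=R^\nabla_{Z,\bar Z,Z,\bar Z}/h(Z,Z)^2$ is, after clearing denominators, a real quartic form on the fibre $T^{1,0}_pM\cong\C^2$ evaluated on $Z\otimes\bar Z\otimes Z\otimes\bar Z$; saying it equals the constant $k$ for all $Z$ is equivalent to saying that the Hermitian symmetrization of $R^\nabla|_{\Lambda^{1,1}}$ (symmetrizing the two holomorphic slots among themselves and the two antiholomorphic slots among themselves) agrees with the corresponding symmetrization of $k\cdot h\odot h$. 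Concretely, writing $Z=\lambda z_1+\mu z_2$ and expanding $R^\nabla_{Z\bar Z Z\bar Z}$ as a polynomial in $\lambda,\bar\lambda,\mu,\bar\mu$, the coefficients of the monomials $|\lambda|^4$, $|\mu|^4$, $|\lambda|^2|\mu|^2$, $\lambda\bar\mu|\lambda|^2$, etc., must match those of $k(|\lambda|^2+|\mu|^2)^2=k(|\lambda|^4+2|\lambda|^2|\mu|^2+|\mu|^4)$.

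Next I would read off the constraints monomial by monomial. The $|\lambda|^4$ coefficient is $R^\nabla_{1\bar11\bar1}$, which in the convention of \eqref{Rbasis} (reading off the $(z_{1\bar1},z_{1\bar1})$ entry, up to the sign built into the grouping) is the top-left entry; this forces it to be $k$. Similarly the $|\mu|^4$ coefficient is the bottom-right entry $l$, forcing $l=k$. The mixed $|\lambda|^2|\mu|^2$ coefficient collects $R^\nabla_{1\bar12\bar2}+R^\nabla_{1\bar22\bar1}+R^\nabla_{2\bar11\bar2}+R^\nabla_{2\bar21\bar1}$, i.e.\ (in the matrix notation) $w+\bar v+v+u$, and this must equal $2k$; that is the stated relation $u+v+\bar v+w=2k$. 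The coefficients of $|\lambda|^2\lambda\bar\mu$ and $|\mu|^2\bar\lambda\mu$ (and their conjugates) must vanish, since $k\cdot h\odot h$ has no such terms; tracking which curvature components these are shows they are precisely the combinations forcing the off-diagonal entries in the second/third rows-and-columns that currently read $\bar{a'},a',\bar{b'},b'$ to coincide (after symmetrization) with $\bar a,a,\bar b,b$ with the displayed signs, and likewise forcing the $(z_{1\bar2},z_{1\bar2})$ and $(z_{2\bar1},z_{2\bar1})$ diagonal entries $x,\bar x$ to vanish. The symmetry of $R^\nabla$ under swapping the two index pairs (valid on the $(1,1)$ block, which is the complexification of a real symmetric bilinear form on $\Lambda^-\oplus\C F$) then pins down the remaining sign pattern so that the matrix takes exactly the form \eqref{ChernR-balas}. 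Conversely, if the matrix has that form, substituting back into the quartic expansion of $H(Z)$ gives $k(|\lambda|^2+|\mu|^2)^2$ identically, so $H\equiv k$.

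The main bookkeeping obstacle is keeping the index conventions consistent: the grouping convention \eqref{Rconvention} together with the sign in \eqref{sectional-curvature} and the complex-linear extension means one must be careful about which entry of \eqref{Rbasis} corresponds to $R^\nabla_{i\bar jk\bar l}$ and with what sign, and about the fact that $a$ versus $a'$ (and $b$ versus $b'$) are a priori independent until the constant-curvature hypothesis is imposed. Rather than expanding a general $Z$, it is cleaner to polarize: test $H$ on $Z=z_1$, $Z=z_2$, $Z=z_1+z_2$, $Z=z_1+iz_2$, $Z=z_1+\epsilon z_2$ for formal $\epsilon$, which isolates each monomial coefficient in turn and makes each constraint above transparent with minimal algebra. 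Once the constraints are extracted, matching against \eqref{Rbasis} to produce \eqref{ChernR-balas} is immediate, and the converse is a one-line verification. Note that no use is made here of Lemma~\ref{lem:curv-restrict}, so the two lemmas are logically independent; combining them is what yields the self-duality statement in the next step.
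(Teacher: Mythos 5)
Your core argument coincides with the paper's proof: write $Z=xz_1+yz_2$, expand $R^\nabla_{Z,\bar Z,Z,\bar Z}$ as a polynomial in $x,\bar x,y,\bar y$, and equate coefficients with $k\,h(Z,Z)^2=k(|x|^2+|y|^2)^2$. The monomials $|x|^4$, $|y|^4$, $|x|^2|y|^2$, $x^2\bar y^2$, $x^2\bar x\bar y$, etc.\ then force exactly $l=k$, $x=0$, $a'=-a$, $b'=-b$ and $u+v+\bar v+w=2k$, i.e.\ \eqref{ChernR-balas}, and the converse is immediate. This is the paper's argument verbatim, and your observation that the lemma is logically independent of Lemma~\ref{lem:curv-restrict} is also correct.

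One remark in your plan is false, although fortunately also unnecessary: you invoke ``the symmetry of $R^\nabla$ under swapping the two index pairs'' on the $(1,1)$-block to pin down the sign pattern. The Hermitian connection has torsion, so its curvature does \emph{not} satisfy the pair-swap symmetry; the restriction to $\Lambda^{1,1}\otimes\Lambda^{1,1}$ is the complexification of a \emph{real} bilinear form (which is what produces the conjugation pattern in \eqref{Rbasis}) but it is not symmetric --- Lemma~\ref{lem:curv-restrict} exhibits the lower-left block as $R_F^T+\beta_0$ against an upper-right block $R_F$, and the target matrix \eqref{ChernR-balas} is itself visibly non-symmetric ($u\neq w$ in general, and the $(2,1)$ entry is $-\bar a$ rather than $\bar a$). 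Imposing that symmetry would over-constrain, forcing $a'=a$ and hence $a=0$. No such input is needed: the vanishing of the coefficients of $x^2\bar x\bar y$, $\bar x^2xy$, $x\bar y^2y$, $\bar xy^2\bar y$ already determines the full sign pattern, since each of these coefficients is a sum of two a priori independent entries of \eqref{Rbasis}. Delete that sentence and your proof is the paper's.
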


\begin{proof}
Let $Z=xz_1+yz_2$ for arbitrary $x,y\in \C$, and expand both sides of the equation $k\cdot h(Z,Z)^2 = R^\nabla(Z,\bar{Z},Z,\bar{Z})$. We have
\[
	k\cdot h(Z,Z)^2 = k|x|^4+2k|x|^2|y|^2 + k|y|^4
\]
while for the right hand side
\begin{align*}
&|x|^4 R_{z_1\bar{z}_1z_1\bar{z}_1}+ |y|^4 R_{z_2\bar{z}_2z_2\bar{z}_2}\\
&+|x|^2|y|^2 \left(R_{z_1\bar{z}_1z_2\bar{z}_2}+R_{z_2\bar{z}_2z_1\bar{z}_1}+R_{z_2\bar{z}_1z_1\bar{z}_2}+R_{z_1\bar{z}_2z_2\bar{z}_1}\right)\\
&+x^2\bar{y}^2 R_{z_1\bar{z}_2z_1\bar{z}_2}
+\bar{x}^2y^2 R_{z_2\bar{z}_1z_2\bar{z}_1}\\
&+x^2\bar{x}\bar{y}\left( R_{z_1\bar{z}_1z_1\bar{z}_2}+R_{z_1\bar{z}_2z_1\bar{z}_1} \right)\\
&+\bar{x}^2xy\left( R_{z_2\bar{z}_1z_1\bar{z}_1}+R_{z_1\bar{z}_1z_2\bar{z}_1} \right)\\
&+ \bar{x}y^2\bar{y} \left( R_{z_2\bar{z}_2z_2\bar{z}_1}+R_{z_2\bar{z}_1z_2\bar{z}_2} \right)\\
&+ x\bar{y}^2y \left( R_{z_2\bar{z}_2z_1\bar{z}_2}+ R_{z_1\bar{z}_2z_2\bar{z}_2} \right)
\end{align*}
Since $x,y \in \C$ are arbitrary, this an equality between polynomials in the variables $x,\bar{x},y,\bar{y}$. For them to agree, all coefficients must be equal.
\end{proof}

\begin{remark}
The above is a simplified proof of a theorem of Balas \cite{MR779217} for Hermitian manifolds.
\end{remark}

\subsection{Proof of Theorem~\textup{\ref{thm:char}}}

First note that by Lemma~\ref{lem:curv-restrict} we automatically have $a+b=a'+b'$ and $v=\frac{s_g}{12}\in \R$ in \eqref{Rbasis}.

Lemma~\ref{holLemma-curv} shows that $H$ is constant $k$ at a point if and only if
\begin{equation}\label{eq:holk1}
	x=0,\quad
	a'=-a,\quad
	a=-b,\quad
	k=l,\quad
	u+2v+w=2k.
\end{equation}
On the other hand, by \eqref{Rnewbasis} and the following discussion, self-duality means\Marginpar{Then also $a'=b$}
\begin{equation}\label{eq:holk2}
	x=0,\quad
	a=b',\quad
	u+2v+w=k+l.
\end{equation}
From \eqref{Rbasis} we read off
\begin{align*}
\rho &= i(k+w)z^{1\bar{1}} + i\overline{(a'+b)}z^{1\bar{2}} + i(a'+b)z^{2\bar{1}} + i(u+l)z^{2\bar{2}}\\
r &= i(k+u)z^{1\bar{1}} + i\overline{(a+b')}z^{1\bar{2}} + i(a+b')z^{2\bar{1}}+i(w+l)z^{2\bar{2}}
\end{align*}
Therefore $\ast \rho = r$ means
\begin{equation}\label{forms-dual}
k=l,\quad
a'+b=-(a+b').
\end{equation}
Under the general assumption $a+b=a'+b'$ and $v\in \R$ it is easy to verify
that \eqref{eq:holk1} is equivalent to \eqref{eq:holk2} with \eqref{forms-dual}.
\hfill\ensuremath{\square}

\section{Sharper results for almost K\"ahler manifolds}\label{sec:21}

We now obtain more information on the terms in
\begin{equation}
R^\nabla=R^g + \alpha - \beta
\end{equation}

\subsection{Full description of $\beta$}

\begin{lemma}
Let $M$ be an almost K\"ahler $4$-manifold. Then
\begin{align}\label{betanorms}
\tilde{\beta}&=-\frac12|A|^2,
&\|\beta_0\|^2&=\frac14|A|^4.
\end{align}
\end{lemma}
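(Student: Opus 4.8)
The plan is to compute $\tilde\beta$ and $\|\beta_0\|^2$ directly from the explicit formula \eqref{explicit-beta} for $\beta$, using the almost K\"ahler constraint \eqref{def:21symplectic} to pin down the components of the gauge potential $A$. Recall from \eqref{A-cplx-antilinear} that $A_X$ is complex antilinear, so $A_X$ maps $T^{1,0}M$ to $T^{0,1}M$ and vice versa; combined with \eqref{def:21symplectic}, $A_{JX}=-JA_X$, this forces $A$ to be of a very restricted type. Working in the local orthonormal frame $(z_1,z_2)$, I would first enumerate which components $A_{\alpha\beta\gamma}$ (lowering all indices by $g_\C$) can be nonzero. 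The antilinearity means the only surviving entries pair a $(1,0)$ vector slot with a $(0,1)$ curvature output (or conjugate), and \eqref{def:21symplectic} identifies $A$ with the Nijenhuis-type tensor; the net effect is that $A$ is determined by the components $A_{\alpha\beta\gamma}$ with all three holomorphic indices (and their conjugates), which are totally determined by $dF=0$. In particular, these satisfy a symmetry/trace relation coming from $dF = 0$, namely that the trace over two indices vanishes.

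Second, I would substitute into $\tilde\beta$. From the proof of Lemma~\ref{lem:curv-restrict}, $\tilde\beta$ is the scalar such that $\beta|_{\C F\otimes\C F}=\tilde\beta\cdot g$, equivalently (up to the sign/normalization fixed there) $\tilde\beta$ is read off from $\beta(F,F)$ via \eqref{explicit-beta}. Plugging $F=i(z^{1\bar1}+z^{2\bar2})$ into $\beta(X,Y,z_\alpha,\bar z_{\bar\beta})=A_{[X\alpha\gamma}A_{Y]\bar\beta}{}^\gamma$ and contracting the appropriate index pairs, one gets $\tilde\beta$ as a quadratic expression in the $A$-components, which after using the trace relations collapses to a multiple of $|A|^2$; comparing normalizations gives the factor $-\tfrac12$. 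Here $|A|^2$ denotes the full tensorial norm of $A$, so one must be careful to express everything in terms of the same invariant — the key point is that in the almost K\"ahler case every nonzero component of $A$ is (up to conjugation and index permutation) one of a small number of independent complex functions, so both $\tilde\beta$ and $|A|^2$ are the same quadratic form in those functions up to a constant.

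Third, for $\|\beta_0\|^2$ I would similarly expand $\beta_0 = \beta|_{\Lambda^{1,1}_0\otimes\C F}$ using \eqref{explicit-beta} with $z_\alpha\wedge\bar z_{\bar\beta}$ ranging over the $\Lambda^{1,1}_0$ directions ($z_{1\bar2}$, $z_{2\bar1}$, $z_{1\bar1}-z_{2\bar2}$) and the second slot equal to $F$. Each entry is again quadratic in the $A$-components; summing the squared norms (with the signature of $g_\C$ on $\Lambda^{1,1}$ from Lemma~\ref{lem:curv-restrict}) and simplifying via the trace/symmetry relations yields $\tfrac14|A|^4$. An alternative, possibly cleaner, route is to use that from \eqref{Rcorrection} the full $\Lambda^{1,1}$-block of $\beta$ has representing matrix with only the first column nonzero, so $\|\beta\|^2 = |\tilde\beta|^2\cdot\|F\|^2\text{-factor} + \|\beta_0\|^2$ is just the Hilbert--Schmidt norm of that block; computing $\|\beta\|^2$ as $\sum A_{[\,\cdot\,]}A_{[\,\cdot\,]}$ in one go and subtracting the $\tilde\beta$ contribution isolates $\|\beta_0\|^2$.

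I expect the main obstacle to be bookkeeping: correctly identifying the independent components of $A$ in the almost K\"ahler case and their reality/symmetry relations from $dF=0$, and then tracking the factors of $2$, $i$, and the indefinite signature of $g_\C$ on $\Lambda^{1,1}$ through the contractions so that the final constants come out exactly $-\tfrac12$ and $\tfrac14$ rather than some other normalization. A secondary subtlety is making sure the antisymmetrization brackets in \eqref{explicit-beta} are handled with the right combinatorial weights. None of this is conceptually hard, but it is the kind of computation where a dropped factor propagates, so I would cross-check the value of $\tilde\beta$ against the known identity $s_* - s_C \sim |A|^2$ (the standard relation between the $*$-scalar and Chern scalar curvatures in the almost K\"ahler case) as an independent consistency check.
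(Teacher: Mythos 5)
Your plan matches the paper's proof: the paper likewise uses \eqref{def:21symplectic} to reduce $A$ to the two complex components $A_{112},A_{212}$, evaluates \eqref{explicit-beta} to obtain $\tilde\beta=-|A_{112}|^2-|A_{212}|^2$ and an explicit column vector for $\beta_0$, and reads off the two norms; your proposed cross-check against $s_*-s_H$ is exactly the paper's identity \eqref{SstarSHerm}. The only caveat is that no extra trace relation from $dF=0$ is needed -- in complex dimension two the skew-symmetry of $A_X$ in its last two slots already leaves just those two components, and $\|\beta_0\|^2=(|A_{112}|^2+|A_{212}|^2)^2$ falls out of the direct sum of squares.
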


Here we use the convention $|A|^2 = \frac12 \sum_{i=1}^4 \tr\left( A_{e_i}^T A_{e_i} \right)$.

\begin{proof}
More precisely, from \eqref{def:21symplectic} we have $\beta \in \Lambda^{1,1}\otimes \C\cdot F$.
Using \eqref{explicit-beta} one sees
\begin{equation}\label{eqn:betamatrix}
	\beta=
	\kbordermatrix{&\C F&\Lambda^{2,0}\oplus\Lambda^{0,2}&&\Lambda_0^{1,1}\\
&\tilde\beta &0	&\vrule&	0\\
& 0&0 &\vrule&	0\\
\cline{2-5}
&\beta_0&0&\vrule&0
}
%=
%\kbordermatrix{&\C F&\Lambda^{2,0}\oplus\Lambda^{0,2}&&\Lambda_0^{1,1}\\
%&-|A_{112}|^2-|A_{212}|^2 &0	&\vrule&	0\\
%& 0&0 &\vrule&	0\\
%\cline{2-5}
%&\beta_0&0&\vrule&0
%}
\end{equation}
where
\begin{align*}
	\tilde\beta &= -|A_{112}|^2-|A_{212}|^2,
	&\beta_0 &=	\begin{pmatrix}
	i\sqrt{2}\cdot A_{112}\overline{A_{212}}\\
	i\sqrt{2}\cdot A_{212}\overline{A_{112}}\\
	|A_{212}|^2-|A_{112}|^2
	\end{pmatrix}.
\end{align*}
In the upper left corner of \eqref{Rcorrection} we recover the well-known formula (\cite{MR1782093}, \cite[(9.4.5)]{gauduchon2010calabi})
\begin{equation}\label{SstarSHerm}
\frac{s_* - s_H}{4} = \frac12|A|^2.\qedhere
\end{equation}
\end{proof}

\begin{proposition}\label{Einstein-Kaehler}
Let $M$ be a self-dual almost Hermitian $4$-manifold. Then $M$ has constant holomorphic
sectional curvature at $p\in M$ precisely when
\begin{equation}\label{RFbeta0}
	R_F=-\frac12 \beta_0^T.
\end{equation}
at that point. In particular, when $M$ is almost K\"ahler and $M$ has pointwise constant holomorphic sectional curvature,
then: $M$ is K\"ahler $\iff R_F=0$.
\end{proposition}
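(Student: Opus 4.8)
The plan is to deduce \eqref{RFbeta0} from Theorem~\ref{thm:char} by tracking how the off-diagonal block $R_F$ of the Riemannian curvature and the correction term $\beta_0$ combine inside $R^\nabla$. By \eqref{Rcorrection}, restricted to the block $\C F \otimes \Lambda_0^{1,1}$ (and its transpose), the Hermitian curvature is governed by the Riemannian pieces $R_F, R_F^T$ together with the $\beta$-correction, which by \eqref{eqn:betamatrix} contributes only $\beta_0$ in the lower-left block and nothing in the upper-right block (recall $\alpha$ contributes nothing to the $(1,1)$-part). Assuming self-duality, Theorem~\ref{thm:char} says constant holomorphic sectional curvature at $p$ is equivalent to condition ii), namely $\ast\rho = r$. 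So the first step is to re-express $\ast\rho = r$ purely in terms of the matrix entries appearing in \eqref{Rnewbasis}, or equivalently in the basis of \eqref{curv-restricted}; the computation in the proof of Theorem~\ref{thm:char} already gives that $\ast\rho=r$ amounts to $k=l$ and $a'+b = -(a+b')$, and combined with self-duality \eqref{eq:holk2} this is equivalent to \eqref{eq:holk1}.

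The second step is to interpret the condition $a=-b$, $a'=-a$ (equivalently $a'=b$ under self-duality, and then $a=-b=-a'=b'$, forcing $a=-b=-a'=b'$, i.e.\ all four are determined by a single number) as the matrix identity $R_F = -\tfrac12\beta_0^T$. For this I would write out $R_F$ in the orthonormal basis \eqref{ONB}: from \eqref{Rnewbasis} the upper-left-to-$\Lambda_0^{1,1}$ entries of $-R^\nabla$ are $\tfrac{i}{\sqrt2}(\overline{a+b'}), \tfrac{i}{\sqrt2}(\overline{b+a'}), \tfrac{-k+l-u+w}{2}$ in the first row; subtracting the known $\beta$-contribution (which by \eqref{eqn:betamatrix} sits only in the lower-left block and equals $\beta_0$) recovers $R_F$ from $R_F^T + \beta_0$ in the lower-left block of \eqref{curv-restricted}. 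Comparing the lower-left block $R_F^T + \beta_0$ of the \emph{symmetric} matrix $-R^\nabla|_{\Lambda^{1,1}}$ with the transpose of the upper-right block $R_F$, symmetry of the full $(1,1)$-curvature forces $R_F^T + \beta_0 = R_F^T + \beta_0$ trivially, so instead one should compare $R^\nabla$ against the Riemannian $R^g$: the upper-right block of $-R^\nabla$ is exactly $R_F$ (no correction), while the lower-left block is $R_F^T + \beta_0$. Self-duality alone does not force these to be transposes, but the constant-curvature condition \eqref{eq:holk1} says precisely that the symmetrization works out, i.e.\ $(R_F^T + \beta_0)^T = R_F$ together with the additional vanishing; unwinding, $R_F = (R_F^T+\beta_0)^T = R_F + \beta_0^T$ would give $\beta_0=0$, so one must instead symmetrize, getting $R_F = R_F^T{}^T = \tfrac12(R_F + (R_F^T+\beta_0)) $ — the cleanest route is to observe that constant holomorphic sectional curvature kills the $\Lambda^{2,0}\oplus\Lambda^{0,2}$ cross-terms and symmetrizes the $(1,1)$-curvature, which combined with \eqref{Rcorrection} yields $-R_F = -R_F^T{}^T$ and hence $2R_F + \beta_0^T = 0$, i.e.\ \eqref{RFbeta0}.

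For the final clause, specialize to the almost K\"ahler case with pointwise constant holomorphic sectional curvature, so \eqref{RFbeta0} holds everywhere. If $M$ is K\"ahler then $A=0$, hence $\beta_0=0$ by \eqref{betanorms} (indeed $\|\beta_0\|^2 = \tfrac14|A|^4$), and then \eqref{RFbeta0} gives $R_F=0$. Conversely, if $R_F=0$ everywhere, then \eqref{RFbeta0} forces $\beta_0 = 0$, so $\|\beta_0\|^2 = \tfrac14|A|^4 = 0$, whence $A\equiv 0$, which by the remark after \eqref{def:21symplectic} ("$M$ is K\"ahler $\iff A=0$") means $M$ is K\"ahler. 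The main obstacle I anticipate is the bookkeeping in the second step: carefully matching the two different bases ($(z_{1\bar1},z_{1\bar2},z_{2\bar1},z_{2\bar2})$ versus the orthonormal \eqref{ONB}) and keeping track of the metric signs in $g_\C$, the transpose conventions, and the factor $\tfrac12$ in $R_0 = \tfrac12\{r_0,\cdot\}$, so that the identity $2R_F = -\beta_0^T$ comes out with exactly the stated constant rather than off by a sign or a factor. Everything else is a direct translation of Theorem~\ref{thm:char} plus the norm computation \eqref{betanorms}.
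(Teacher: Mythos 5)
Your overall strategy is the paper's: restrict to the $(1,1)$-part, use \eqref{Rcorrection} and \eqref{Rnewbasis} to identify the blocks $-R_F$ and $-(R_F^T+\beta_0)$, translate constant holomorphic sectional curvature under self-duality into the entry conditions \eqref{eq:holk1}, and then in the almost K\"ahler case conclude via $\|\beta_0\|^2=\tfrac14|A|^4$ that $R_F=0\iff A=0\iff$ K\"ahler. Your first and last steps are correct. But the central step --- the actual derivation of $R_F=-\tfrac12\beta_0^T$ --- is not carried out and, as written, rests on false intermediate claims. You assert that constant holomorphic sectional curvature ``symmetrizes the $(1,1)$-curvature''; this is the opposite of what happens. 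Under self-duality one has $a=b'$, $a'=b$, so the blocks read $-R_F\equiv(\sqrt2 i\bar a,\sqrt2 ia, v+w-k)$ and $-(R_F^T+\beta_0)\equiv(\sqrt2 i\bar b,\sqrt2 ib, \ast)^T$; symmetry of the $(1,1)$-block would require $a=b$, whereas \eqref{eq:holk1} demands $a=-b$ and $k=l$. The condition that is actually equivalent to \eqref{eq:holk1} is that the lower-left block is the \emph{negative} transpose of the upper-right one, i.e.\ $R_F^T+\beta_0=-R_F^T$, which rearranges to $\beta_0=-2R_F^T$, i.e.\ \eqref{RFbeta0}. Your chain ``$-R_F=-R_F^{TT}$ and hence $2R_F+\beta_0^T=0$'' is a tautology followed by a non sequitur, and the displayed ``$R_F=\tfrac12(R_F+(R_F^T+\beta_0))$'' does not even typecheck (a row vector added to a column vector). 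You yourself flag that you have not pinned down the constant $-\tfrac12$; since that constant is the entire content of \eqref{RFbeta0}, this is a genuine gap rather than a bookkeeping worry.

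The fix is exactly the computation the paper does: substitute the self-duality relations \eqref{eq:holk2} (together with $a=b'$, $a'=b$, which follow from $a+b=a'+b'$) into \eqref{Rnewbasis}, write out the two off-diagonal blocks explicitly in the basis \eqref{ONB}, subtract to get $\beta_0\equiv(\sqrt2 i(\bar a-\bar b),\sqrt2 i(a-b), w-u)^T$, and check entrywise that $R_F=-\tfrac12\beta_0^T$ is equivalent to $a=-b$ together with $u+2v+w=2k$ (equivalently $k=l$), which is \eqref{eq:holk1}. With that computation supplied, the rest of your argument goes through, and your treatment of the K\"ahler equivalence via \eqref{betanorms} agrees with the paper's \eqref{RF2}.
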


\begin{proof}
Putting the self-duality condition \eqref{eq:holk2} from the proof of Theorem~\ref{thm:char} into \eqref{Rnewbasis} we get representative matrices
\[
	-R_F\equiv (\sqrt{2}i\bar{a}, \sqrt{2} ia, v+w-k),\quad
	R_F^T + \beta_0 \equiv -\begin{pmatrix}
	\sqrt{2}i\bar{b}\\
	\sqrt{2}ib\\
	\frac{u+v-k}{2}
	\end{pmatrix}
\]
with respect to the orthonormal basis \eqref{ONB}. Hence a self-dual manifold has constant holomorphic sectional curvature \eqref{eq:holk1} precisely when \eqref{RFbeta0} holds.

In the almost K\"ahler case we may use \eqref{betanorms} to get
\begin{equation}\label{RF2}
\|R_F\|^2 = \frac14 \|\beta_0\|^2 = \frac{|A|^4}{16}.\qedhere%\frac14 \left( \frac{s_* - s^\nabla_H}{4} \right)^2\label{formula-RF}
\end{equation}
%\begin{align}
%	\|R_F\|^2 = \frac14 \|\beta_0\|^2 &= \frac14 \left( \frac{s_* - s^\nabla_H}{4} \right)^2\label{formula-RF}\\
%	a&=\frac12 A_{212}\overline{A_{112}}\\
%	w-u &= |A_{212}|^2-|A_{112}|^2.
%\end{align}
%Hence $R_F \iff A=0 \iff DJ=0$.
\end{proof}

\subsection{Constant holomorphic sectional curvature}

\begin{proposition}
Let $M$ be almost K\"ahler of constant holomorphic sectional curvature $k$. Then we have
\begin{equation}\label{fullChernDecomposition}
-R^\nabla= \kbordermatrix{&\C F&\Lambda^{2,0}\oplus\Lambda^{0,2}&&\Lambda_0^{1,1}\\
&\frac{s_C}{2} g&0&\vrule& R_F\\
&W_F^+&0&\vrule& R_{00}\\
\cline{2-5}
&-R_F^T&0&\vrule& \frac{s_g}{12}g
}
\end{equation}
\end{proposition}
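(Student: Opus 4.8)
The plan is to assemble \eqref{fullChernDecomposition} from the structural results already established, working block by block against the decomposition $\Lambda^2\otimes\C = (\C F \oplus \Lambda^{2,0}\oplus\Lambda^{0,2}) \oplus \Lambda_0^{1,1}$. First I would recall from \eqref{Rcorrection}, \eqref{eqn:betamatrix} that $R^\nabla = R^g + \alpha - \beta$, where $\alpha \in \Lambda^2 \otimes (\Lambda^{2,0}\oplus\Lambda^{0,2})$ is complex anti-linear by \eqref{A-cplx-antilinear}, and (using the almost K\"ahler condition \eqref{def:21symplectic}) $\beta \in \Lambda^{1,1}\otimes \C F$ with only the $\tilde\beta$ and $\beta_0$ entries nonzero. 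So the rows of $-R^\nabla$ indexed by $\C F$, $\Lambda^{2,0}\oplus\Lambda^{0,2}$, and $\Lambda_0^{1,1}$ inherit their structure from $-R^g$ in \eqref{LC-decomposition}, corrected by $\alpha$ in the $\Lambda^{2,0}\oplus\Lambda^{0,2}$ columns and by $\beta$ in the first column.

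Next I would treat the columns indexed by $\Lambda^{2,0}\oplus\Lambda^{0,2}$. The claim is that all three blocks there vanish. The middle block $W_{00}^+ + \frac{c}{2}g$ and the blocks $W_F^+$, $R_{00}$ in those columns of $-R^g$ need to be killed; but here one must be careful, because $\alpha$ also lives in those columns. The cleanest route is to use Theorem~\ref{thm:char}: constant holomorphic sectional curvature implies $W^- = 0$ and $\ast\rho = r$, hence self-duality, and then to invoke the curvature identities forced by the differential/algebraic structure. Actually the honest statement is that the $(2,0)+(0,2)$-columns of $R^\nabla$ itself — i.e.\ $R^\nabla|_{\Lambda^2\otimes(\Lambda^{2,0}\oplus\Lambda^{0,2})}$ — are what we must compute; since $\beta$ contributes nothing there, these equal $(R^g+\alpha)|_{\ldots}$, and the assertion $0$ in those entries is the claim that the antiholomorphic part of $R^\nabla$ vanishes, which should follow from pairing the statement of constant $H$ (Lemma~\ref{holLemma-curv}) across all of $\Lambda^2$, not just $\Lambda^{1,1}$. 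This is the block where I expect to lean hardest on the hypothesis.

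For the remaining blocks the work is already done: the upper-left $\C F$--$\C F$ entry is $\frac{s_C}{2}g$ by definition \eqref{sC} and Lemma~\ref{lem:curv-restrict}; the $\C F$--$\Lambda_0^{1,1}$ entry is $R_F$ and the $\Lambda_0^{1,1}$--$\Lambda_0^{1,1}$ entry is $W^- + \frac{s_g}{12}g = \frac{s_g}{12}g$ (self-duality), both from \eqref{curv-restricted}; the $\Lambda_0^{1,1}$--$\C F$ entry is $R_F^T + \beta_0$, which by Proposition~\ref{Einstein-Kaehler}, specifically \eqref{RFbeta0}, equals $R_F^T - 2R_F^T \cdot$(appropriate sign) $= -R_F^T$ — here I would just substitute $\beta_0 = -2R_F^T$ from \eqref{RFbeta0} and simplify. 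The $W_F^+$ entry in the $\C F$--row, $(\Lambda^{2,0}\oplus\Lambda^{0,2})$--column is genuinely the $\alpha$-contribution $W_F^+$ surviving, and the $R_{00}$ entry in the last row is likewise the surviving mixed term; these are named but not claimed to vanish, consistent with the displayed matrix.

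The main obstacle is confirming that the entire $(\Lambda^{2,0}\oplus\Lambda^{0,2})$--column of $-R^\nabla$ reduces to $(W_F^+, 0, -R_F^T)$ as displayed — in particular that the self-dual $\Lambda^{2,0}\oplus\Lambda^{0,2}$ to $\Lambda^{2,0}\oplus\Lambda^{0,2}$ block is zero and the $\Lambda_0^{1,1}$--row entry there is exactly $-R_F^T$ and not something new. I would handle this by extending the coefficient-matching argument of Lemma~\ref{holLemma-curv} beyond the $(1,1)$-block: constant $H$ constrains $R^\nabla$ on $T^{1,0}$-vectors, and combined with the reality of $R^\nabla$ as the complexification of a real tensor and the known shape of $\alpha$, this pins down every entry. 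Once each block is identified with an entry of \eqref{LC-decomposition} corrected by the explicit $\alpha$, $\beta$, the asserted matrix \eqref{fullChernDecomposition} follows by collecting terms.
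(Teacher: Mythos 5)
Your identification of the first and third columns is essentially right: the upper-left entry is $\frac{s_C}{2}g$ by \eqref{sC}, the $(\Lambda_0^{1,1},\C F)$-entry is $R_F^T+\beta_0=-R_F^T$ by substituting \eqref{RFbeta0}, and the lower-right block is $W^-+\frac{s_g}{12}g=\frac{s_g}{12}g$ by Theorem~\ref{thm:char}. But the step you yourself flag as ``the main obstacle'' --- the vanishing of the entire $\Lambda^{2,0}\oplus\Lambda^{0,2}$ \emph{column} --- is a genuine gap, and the route you propose for closing it cannot work. The constant-curvature hypothesis gives no information there: $H(Z)=R^\nabla_{Z\bar Z Z\bar Z}/h(Z,Z)^2$ has $(1,1)$-forms in both slots, so the coefficient-matching identity of Lemma~\ref{holLemma-curv} only ever sees $R^\nabla|_{\Lambda^{1,1}\otimes\Lambda^{1,1}}$ and says nothing about components whose second slot lies in $\Lambda^{2,0}\oplus\Lambda^{0,2}$; ``extending'' that argument beyond the $(1,1)$-block is a dead end. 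The missing fact is structural and holds for any almost Hermitian manifold: since $\nabla J=0$, the curvature endomorphism $[\nabla_X,\nabla_Y]-\nabla_{[X,Y]}$ commutes with $J$, hence lies in $\mathfrak{u}(2)\cong\Lambda^{1,1}_{\R}$, so $R^\nabla\in\Lambda^2\otimes\Lambda^{1,1}$ identically and the whole middle column vanishes. This is precisely the fact the paper's one-line proof invokes, together with $\alpha\in\Lambda^2\otimes\Lambda^{2,0+0,2}$ (which then shows $\alpha$ exactly cancels the middle column of $R^g$, so it never needs to be computed), \eqref{eqn:betamatrix}, and \eqref{RFbeta0}.

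A secondary problem is that you transpose some entries. The condition $\alpha\in\Lambda^2\otimes(\Lambda^{2,0}\oplus\Lambda^{0,2})$ constrains the \emph{second} slot, so $\alpha$ lives only in the middle column, not the middle row; the entry $W_F^+$ sits in the $(\Lambda^{2,0}\oplus\Lambda^{0,2})$-row, $\C F$-column and is inherited unchanged from the $(W_F^+)^T$ entry of \eqref{LC-decomposition}, not an ``$\alpha$-contribution surviving.'' Likewise the displayed $\Lambda^{2,0}\oplus\Lambda^{0,2}$-column is $(0,0,0)$, not $(W_F^+,0,-R_F^T)$. Once the observation $R^\nabla\in\Lambda^2\otimes\Lambda^{1,1}$ replaces your proposed extension of Lemma~\ref{holLemma-curv}, the rest of your block-by-block assembly goes through and coincides with the paper's argument.
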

\begin{proof}
Put all the facts
$\alpha \in \Lambda^2\otimes \Lambda^{2,0+0,2}$,
$R^\nabla \in \Lambda^2\otimes \Lambda^{1,1}$,
and \eqref{eqn:betamatrix}, \eqref{RFbeta0} into the
formula $R^\nabla=R^g + \alpha - \beta$.
\end{proof}

We now collect some formulas that will be useful. For these, assume
that $M$ is almost K\"ahler and has pointwise constant holomorphic sectional curvature $k$.

Comparing the upper left corners of \eqref{Rnewbasis} and \eqref{fullChernDecomposition} and then using \eqref{eq:holk1} we see
\begin{equation}\label{sCv}
s_C = 4k-2v.
\end{equation}
Recall also
\begin{equation}\label{sgv}
s_g = 12v.
\end{equation}
Since $M$ is almost K\"ahler we have for the $*$-scalar curvature \cite[(9.4.5)]{gauduchon2010calabi}
\begin{equation}\label{sStarV}
	s_* = 4s_C - s_g = 16k-20v.
\end{equation}
Moreover \eqref{RFbeta0} can now be written
\begin{equation}\label{RFv}
|R_F|^2 = \frac14\left( \frac{s_* - s_H}{4} \right)^2 = (k-2v)^2
\end{equation}

Putting these formulas into \eqref{SstarSHerm} shows:

\begin{lemma}\label{integrability-criterion}
Let $M$ be almost K\"ahler of pointwise constant holomorphic sectional curvature $k$. We then have $v\leq \frac{k}{2}$ with equality if and only if $M$ is K\"ahler.\hfill\ensuremath{\square}
\end{lemma}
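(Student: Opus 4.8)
The statement follows by combining the identity \eqref{SstarSHerm}, which reads $\tfrac{s_*-s_H}{4}=\tfrac12|A|^2$, with the formulas \eqref{sStarV} and \eqref{sC} expressing the scalar curvatures in terms of $k$ and $v$. First I would recall that $s_H=2s_C$ by \eqref{sC}, and that $s_C=4k-2v$ by \eqref{sCv} and $s_*=16k-20v$ by \eqref{sStarV}. Substituting these into $\tfrac{s_*-s_H}{4}=\tfrac12|A|^2$ gives
\begin{equation}
\tfrac12 |A|^2 = \frac{s_* - s_H}{4} = \frac{(16k-20v)-(8k-4v)}{4} = 2k-4v = 2(k-2v),
\end{equation}
so that $|A|^2 = 4(k-2v)$, equivalently $k-2v = \tfrac14|A|^2 \geq 0$. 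This immediately yields $v \leq \tfrac{k}{2}$.

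For the equality case, note that $v=\tfrac{k}{2}$ holds if and only if $|A|^2=0$, i.e.\ $A=0$. By the remark after \eqref{def:21symplectic}, $A=0$ is precisely the condition that $M$ be K\"ahler, so equality holds if and only if $M$ is K\"ahler. (One could alternatively phrase the equivalence through $R_F$ using \eqref{RFv} together with Proposition~\ref{Einstein-Kaehler}: $v=\tfrac{k}{2}$ forces $R_F=0$, hence $\beta_0=0$ by \eqref{RFbeta0} and \eqref{betanorms}, hence $A_{112}=A_{212}=0$, hence $A=0$; but the direct route via $|A|^2$ is cleaner.)

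\textbf{Main obstacle.} There is essentially no obstacle here: the lemma is a bookkeeping corollary of the structural results already established, and the only thing to be careful about is the consistent use of the normalization conventions for $|A|^2$, $s_*$, $s_C$, and $s_H$ as fixed in the preceding subsections, so that the arithmetic in the displayed computation comes out with the stated sign. The one conceptual point worth flagging is that the sign $k-2v\geq 0$ is forced by the almost K\"ahler hypothesis through \eqref{SstarSHerm} (which uses $dF=0$); without it the quantity $\tfrac{s_*-s_H}{4}$ need not be a nonnegative multiple of $|A|^2$.
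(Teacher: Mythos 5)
Your proposal is correct and is exactly the paper's intended argument: the paper proves the lemma by "putting these formulas into \eqref{SstarSHerm}," i.e.\ substituting $s_H=2s_C=8k-4v$ and $s_*=16k-20v$ into $\tfrac{s_*-s_H}{4}=\tfrac12|A|^2$ to get $|A|^2=4(k-2v)\ge 0$, with equality iff $A=0$ iff $M$ is K\"ahler. The arithmetic and the use of the almost K\"ahler hypothesis are handled as in the paper.
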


The formulas also show that when $M$ has globally constant holomorphic sectional curvature, the constancy of any of $s_g, s_*, s_C$ is equivalent to that of $v$.

\section{Integral formulas}\label{sec:integral-formulas}

Having understood the pointwise (algebraic) implications of constant holomorphic sectional curvature, we now turn to properties that do not hold for general algebraic curvature tensors. Thus we formulate the consequences of Chern--Weil theory, which stem ultimately from the differential Bianchi identity.

We will assume in this section that $M$ is an almost K\"ahler $4$-manifold of pointwise constant holomorphic sectional curvature, but generalizations are possible.

\subsection{Chern--Weil theory}
Given an arbitrary metric connection $\nabla$ on $TM$ and a polynomial $P$ on $\mathfrak{so}(4)$, invariant under the adjoint action of $SO(4)$, one obtains a differential form $P(R^\nabla)$ by substituting the indeterminants by the curvature $R^\nabla\colon \Lambda^2 \to \mathfrak{so}(TM)$. The upshot of Chern--Weil theory (see for example~\cite{MR0440554}) is that $P(R^\nabla)$ defines a closed form whose cohomology class is independent of $\nabla$. In particular, the integral over $M$ remains the same for all connections.

In the $4$-dimensional case, it suffices to consider the Pontrjagin and Pfaffian polynomials. To express these conveniently, use the metric to identify $\mathfrak{so}(TM)\cong \Lambda^2$ and
decompose as above
\[
	-R^\nabla = \kbordermatrix{&\Lambda^+&  &\Lambda^-\\
 & R_{sd}^+& \vrule & R_{asd}^+\\\cline{2-4}
 & R_{sd}^- & \vrule & R_{asd}^-
 }.
\]
Then
\begin{equation}\label{char-forms}
\begin{aligned}
p_1(R^\nabla) &= \frac{1}{4\pi^2} \left(\|R_{sd}^+\|^2+\|R_{sd}^-\|^2 - \|R_{asd}^+\|^2 - \|R_{asd}^-\|^2 \right)\vol,\\
\Pf(R^\nabla) &= \frac{1}{8\pi^2}\left(\|R_{sd}^+\|^2-\|R_{sd}^-\|^2 - \|R_{asd}^+\|^2 + \|R_{asd}^-\|^2 \right)\vol,
\end{aligned}
\end{equation}
where the norm is induced by the usual inner product $\tr(f^*g)$ on $\End(\Lambda^2)$.
For the Levi-Civita connection we evaluate \eqref{char-forms} using \eqref{LC-decomposition}, $W^-=0$, and $\|I\|^2=3$:
\begin{equation}\label{p1Dg}
\begin{aligned}
p_1(D^{g}) &=  \frac{1}{4\pi^2}\left( \|W^+ +\frac{s_g}{12}I\|^2 + \|R_0^*\|^2 - \|R_0\|^2 - \|W^- + \frac{s_g}{12}I\|^2\right)\operatorname{vol}_g\\
&= \frac{1}{4\pi^2}(\|W^+\|^2 - \|W^-\|^2)\operatorname{vol}_g\\
&= \frac{1}{4\pi^2}\|W^+\|^2\operatorname{vol}_g\\
\end{aligned}
\end{equation}
and\footnote{Traditionally one writes $2\|R_0\|^2=\frac12 \|r_0\|^2$ in terms of the Ricci tensor $r_0$.}
\begin{equation}\label{Pfaffian-LC}
\begin{aligned}
\operatorname{Pf}(D^g) &= \frac{1}{8\pi^2}(\|W^+ +\frac{s_g}{12}I\|^2 + \|W^- + \frac{s_g}{12}I\|^2 - \|R_0^*\|^2 - \|R_0\|^2)\operatorname{vol}_g\\
&=\frac{1}{8\pi^2} (\|W^+\|^2 + \|W^-\|^2 + \frac{1}{24}s_g^2 - 2\|R_0\|^2)\operatorname{vol}_g\\
&= \frac{1}{8\pi^2} (\|W^+\|^2 + \frac{1}{24}s_g^2 - 2\|R_0\|^2)\operatorname{vol}_g
\end{aligned}
\end{equation}
Similarly, for the Hermitian connection we evaluate \eqref{char-forms} using \eqref{fullChernDecomposition}:
\begin{align}
\label{p1Nabla}p_1(\nabla) &= \frac{1}{4\pi^2} \left( \frac{s_C^2}{4} + \|W_F^+\|^2 + \|R_{00}\|^2 - \frac{s_g^2}{48} \right)\operatorname{vol}_g\\
\operatorname{Pf}(\nabla) &= \frac{1}{8\pi^2} \left(\frac{s_C^2}{4}+\|W_F^+\|^2 + \frac{s_g^2}{48} - 2\|R_F\|^2 - \|R_{00}\|^2 \right)\operatorname{vol}_g
\end{align}

\subsection{Index theorems}
The pre-factors in \eqref{char-forms} are chosen so that for the signature and Euler characteristic the classical index theorems hold (see~\cite{MR0440554}):
\begin{equation}\label{indextheorem}
\begin{aligned}
\sigma &= \frac13 \int_M p_1(R^\nabla)\\
\chi &= \int_M \Pf(R^\nabla)
\end{aligned}
\end{equation}

This gives us two expressions for the signature and for the Euler characteristic \eqref{indextheorem}. Equating these leads to the same conclusion in both cases:

\begin{proposition}
Let $M$ be a closed almost K\"ahler $4$-manifold of pointwise constant holomorphic sectional curvature $k$. Then
\textup(we omit the volume form\textup)
\begin{equation}\label{Chern-Weil-Conclusion}
\int_M |W_F^+|^2 + |W_{00}^+|^2 + 4(5k-7v)(k-2v) = \int_M |R_{00}|^2
\end{equation}
\end{proposition}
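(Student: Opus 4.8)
The plan is to compute the signature $\sigma$ and the Euler characteristic $\chi$ in two ways — once using the Levi-Civita connection and once using the Hermitian connection — and then subtract the resulting identities. Since by Theorem~\ref{thm:char} the manifold is self-dual ($W^-=0$) and satisfies $\ast\rho=r$ at every point, the formulas \eqref{p1Dg}, \eqref{Pfaffian-LC} on the one hand and \eqref{p1Nabla}, the Pfaffian formula for $\nabla$ on the other, are available pointwise. Chern--Weil theory \eqref{indextheorem} guarantees $\int_M p_1(D^g)=\int_M p_1(\nabla)$ and $\int_M \Pf(D^g)=\int_M\Pf(\nabla)$, since both connections are metric connections on the same bundle $TM$. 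So the first step is simply to write down these two integral identities.

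Next I would eliminate the terms involving $W^+$ and the trace-free Ricci tensor $R_0$, which appear in the Levi-Civita formulas but not in the Hermitian ones. The clean way is to recall the $U(2)$-refinement of the curvature decomposition \eqref{LC-decomposition}: $W^+ = W_F^+ + W_{00}^+ + \tfrac{c}{2}g$ acting on $\C F \oplus (\Lambda^{2,0}\oplus\Lambda^{0,2})$, with $c = \tfrac{s_g}{4}-d = \tfrac{s_g}{4}-\tfrac{s_*}{4}$, so that $\|W^+\|^2 = \|W_F^+\|^2 + \|W_{00}^+\|^2 + \tfrac23 c^2$ (using $\|I_{\C F}\|^2 = \tfrac12$ appropriately normalized — I will need to be careful with the norm of the identity on the one-dimensional summand $\C F$). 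Similarly $\|R_0\|^2 = \|R_F\|^2 + \|R_{00}\|^2$ by \eqref{LC-decomposition}. Substituting these into the difference of the two signature identities (or equivalently the two Euler identities), the $W^+$, $R_0$ data reorganize into $W_F^+$, $W_{00}^+$, $R_F$, $R_{00}$, $s_C$, $s_g$, $s_*$, and the $\|W_F^+\|^2$, $\|R_{00}\|^2$ terms will not fully cancel — that is the origin of the left and right sides of \eqref{Chern-Weil-Conclusion}. The $\|W_{00}^+\|^2$ term survives because $W_{00}^+$ does not appear in \eqref{fullChernDecomposition}. Indeed a consistency check is that \emph{both} the signature difference and the Euler difference must give the same final equation, as asserted in the text; this is a useful way to catch sign or normalization errors.

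The remaining work is to convert all the scalar curvature combinations into the single variable $v$. Here I use the dictionary established at the end of Section~\ref{sec:21}: $s_g = 12v$ by \eqref{sgv}, $s_C = 4k-2v$ by \eqref{sCv}, $s_* = 16k-20v$ by \eqref{sStarV}, and $\|R_F\|^2 = (k-2v)^2$ by \eqref{RFv}. One also needs $\|\beta_0\|^2 = \tfrac14|A|^4$ and \eqref{SstarSHerm} to relate $|A|^2$ to $k-2v$, though since $|R_F|^2$ already encodes this, it may suffice to carry $(k-2v)^2$ directly. Substituting, the scalar part of the difference collapses to a polynomial in $k$ and $v$; the claim is that it equals $4(5k-7v)(k-2v)$, which I would verify by expanding — this is the one genuinely computational step, but it is a finite polynomial identity in two variables and poses no conceptual difficulty.

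The main obstacle I anticipate is bookkeeping the norms correctly: the norm on $\End(\Lambda^2)$ is $\tr(f^*g)$, and the identity endomorphisms on the various summands ($\C F$ one-dimensional, $\Lambda^{2,0}\oplus\Lambda^{0,2}$ two-dimensional, $\Lambda^-$ three-dimensional) have different norms, as does the identity $I$ on $\Lambda^\pm$ with $\|I\|^2 = 3$ used already in \eqref{p1Dg}. Getting the coefficient of $c^2$ (equivalently of $(s_g-s_*)^2$, hence of $(k-2v)^2$) right in $\|W^+\|^2 = \|W_F^+\|^2+\|W_{00}^+\|^2 + (\text{const})\cdot c^2$ is precisely what determines the numerical coefficients $5k-7v$ and $7v$ — so that is where care is needed. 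Everything else is substitution and a polynomial expansion.
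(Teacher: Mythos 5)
Your strategy coincides with the paper's: equate $\int_M p_1(D^g)=\int_M p_1(\nabla)$ (and, as a check, the two Pfaffian integrals), expand $\|W^+\|^2$ into its $U(2)$-blocks, use $\|R_0\|^2=\|R_F\|^2+\|R_{00}\|^2$, and then substitute the dictionary \eqref{sCv}--\eqref{RFv}. The one concrete slip is in your block expansion of $\|W^+\|^2$: in \eqref{LC-decomposition} the piece $W_F^+$ labels an \emph{off-diagonal} block of the symmetric operator $W^+$, so it appears together with its transpose and contributes \emph{twice} to the Hilbert--Schmidt norm. The correct identity (the one the paper uses) is
\[
\|W^+\|^2 \;=\; 2\|W_F^+\|^2+\|W_{00}^+\|^2+\tfrac16\,(3s_C-s_g)^2 ,
\]
whereas you wrote $\|W_F^+\|^2+\|W_{00}^+\|^2+\tfrac23 c^2$. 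The factor $2$ matters: after cancelling against the single $\|W_F^+\|^2$ in \eqref{p1Nabla}, it is exactly what leaves one copy of $|W_F^+|^2$ on the left of \eqref{Chern-Weil-Conclusion}; with your coefficient that term would cancel completely, contradicting your own (correct) expectation that it survives. Your scalar coefficient also needs adjusting: using $s_*=4s_C-s_g$ one finds the trace part contributes $\tfrac16(3s_C-s_g)^2=6(2k-3v)^2$, and the final polynomial identity to verify is $6(2k-3v)^2-(2k-v)^2+3v^2=4(5k-7v)(k-2v)$, which indeed holds. With these two coefficients corrected, your argument is the paper's proof.
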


\begin{proof}
By the above remarks
\[
	\int_M p_1(D^g)= \int_M p_1(\nabla)
\]
and we insert \eqref{p1Dg}, \eqref{p1Nabla}. Then using \eqref{LC-decomposition} we get
\[
\|W^+\|^2 = 2\|W_F^+\|^2 + \|W_{00}^+\|^2 + \frac{1}{6}(3s_C-s_g)^2.
\]
since in the almost K\"ahler case $s_*=4d$, $\frac{s_g}{4}=c+d$, and $\frac{s_g+s_*}{2}=2s_C$. Finally insert \eqref{sCv}--\eqref{sStarV} to obtain the conclusion \eqref{Chern-Weil-Conclusion}. Doing the same computation for the Pfaffian, use $\|R_0\|^2 = \|R_{00}\|^2 + \|R_F\|^2$. This leads to the same formula \eqref{Chern-Weil-Conclusion}.
\end{proof}

Putting \eqref{Chern-Weil-Conclusion} and \eqref{sCv}--\eqref{RFv} into \eqref{indextheorem} gives the following:

\begin{proposition}
Let $M$ be a closed almost K\"ahler $4$-manifold of pointwise constant holomorphic sectional curvature $k$. Then
\begin{align}\label{chi}
\chi &= \frac{-1}{8\pi^2} \int_M |W_{00}^+|^2+(60v^2-72 kv + 18k^2)\\
\frac{3}{2}\sigma &= \frac{1}{8\pi^2} \int_M 2|W_F^+|^2 + |W_{00}^+|^2 + 6(2k-3v)^2 \enskip\geq\enskip 0\label{sigma}
\end{align}
\end{proposition}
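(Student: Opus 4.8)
The plan is to obtain the two stated formulas by equating the two Chern--Weil evaluations of the signature and of the Euler characteristic, exactly as was done for the Pontrjagin form in the preceding proposition, and then substituting the pointwise identities \eqref{sCv}--\eqref{RFv} together with the already-proved consequence \eqref{Chern-Weil-Conclusion} to eliminate all curvature quantities except $k$, $v$, $|W_F^+|^2$, and $|W_{00}^+|^2$.

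First I would compute $\chi$ via the Hermitian connection. Inserting $\operatorname{Pf}(\nabla)$ into \eqref{indextheorem} gives $\chi = \frac{1}{8\pi^2}\int_M \bigl(\tfrac{s_C^2}{4}+|W_F^+|^2+\tfrac{s_g^2}{48}-2|R_F|^2-|R_{00}|^2\bigr)$. Now use \eqref{Chern-Weil-Conclusion} to replace $\int_M|R_{00}|^2$ by $\int_M |W_F^+|^2+|W_{00}^+|^2+4(5k-7v)(k-2v)$, which cancels the $|W_F^+|^2$ term and leaves $\chi = \frac{1}{8\pi^2}\int_M \bigl(\tfrac{s_C^2}{4}+\tfrac{s_g^2}{48}-2|R_F|^2-|W_{00}^+|^2-4(5k-7v)(k-2v)\bigr)$. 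Then substitute $s_C=4k-2v$, $s_g=12v$, and $|R_F|^2=(k-2v)^2$ from \eqref{sCv}, \eqref{sgv}, \eqref{RFv}, and expand: the scalar part becomes $(2k-v)^2+3v^2-2(k-2v)^2-4(5k-7v)(k-2v)$, which I expect to collect to $-(60v^2-72kv+18k^2)$ after routine algebra, yielding \eqref{chi}. For $\tfrac32\sigma$ I would instead start from $\sigma=\tfrac13\int_M p_1(\nabla)$, so $\tfrac32\sigma=\frac{1}{8\pi^2}\int_M\bigl(\tfrac{s_C^2}{4}+|W_F^+|^2+|R_{00}|^2-\tfrac{s_g^2}{48}\bigr)$; again replace $\int_M|R_{00}|^2$ using \eqref{Chern-Weil-Conclusion}, this time \emph{doubling} the $|W_F^+|^2$ and $|W_{00}^+|^2$ contributions, and then substitute the scalar identities. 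The scalar part should collect to $6(2k-3v)^2$, giving \eqref{sigma}; nonnegativity is then immediate since every summand in the integrand is a square.

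The only genuine subtlety, beyond bookkeeping, is confirming that the scalar polynomials in $k$ and $v$ really do collapse to the clean squares $60v^2-72kv+18k^2$ and $6(2k-3v)^2$; this is a finite computation with the substitutions above and I would simply carry it out carefully, double-checking the coefficient of $v^2$ and the cross term. Note one uses $p_1(\nabla)=p_1(D^g)$ and $\operatorname{Pf}(\nabla)=\operatorname{Pf}(D^g)$ (Chern--Weil invariance of the integrals) only implicitly, through \eqref{Chern-Weil-Conclusion}; the present proposition instead pairs the \emph{Hermitian} Pfaffian with the \emph{Hermitian} Pontrjagin form against the topological invariants $\chi$ and $\sigma$, which is legitimate since those integrals are connection-independent. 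The main obstacle, such as it is, is thus purely the algebraic simplification, and the structural input \eqref{Chern-Weil-Conclusion} is what makes the $|W_F^+|^2$ terms combine correctly.
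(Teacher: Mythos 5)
Your proposal is correct and follows exactly the paper's (very terse) proof: evaluate $\chi=\int_M\operatorname{Pf}(\nabla)$ and $3\sigma=\int_M p_1(\nabla)$ for the Hermitian connection, eliminate $\int_M|R_{00}|^2$ via \eqref{Chern-Weil-Conclusion}, and substitute \eqref{sCv}--\eqref{RFv}. The scalar polynomials do collapse as you anticipate, to $-(60v^2-72kv+18k^2)$ and $6(2k-3v)^2$ respectively, so only the minor verbal slip about ``doubling'' the $|W_{00}^+|^2$ term (it enters with coefficient $1$, only $|W_F^+|^2$ is doubled) needs correcting.
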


Combined with \cite[Lemma~3]{MR1604803} we conclude:

\begin{corollary}\label{Mehdi-obs}
Let $M$ be closed almost K\"ahler of globally constant holomorphic sectional curvature $k$.
Suppose $M$ is simply connected \textup(or, more generally, that $5\chi + 6\sigma \neq 0$\textup) and
that any of $s_g, s_*, s_C$ is constant. Then $M$ is K\"ahler.
\end{corollary}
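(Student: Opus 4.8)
The strategy is to combine the integral formula \eqref{chi} with a linear inequality of the form $a\chi+b\sigma\geq 0$ (valid under the stated topological hypothesis) so that the geometric integrand is forced to vanish pointwise. First I would recall \cite[Lemma~3]{MR1604803}: for a closed self-dual almost K\"ahler $4$-manifold one has a Gauss--Bonnet-type inequality relating $\chi$ and $\sigma$ — concretely an estimate such as $5\chi+6\sigma\geq 0$, with the relevant integrand expressible in terms of $|W^+_{00}|$, $|W^+_F|$ and the scalar curvatures. Under the hypothesis that $5\chi+6\sigma\neq 0$, together with that inequality, we obtain the strict inequality $5\chi+6\sigma>0$.

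Next I would assemble the purely local quantities. By Lemma~\ref{integrability-criterion} we have $v\leq k/2$ pointwise, with equality exactly where $M$ is K\"ahler. Since $M$ has \emph{globally} constant holomorphic sectional curvature $k$ and we are assuming one of $s_g,s_*,s_C$ is constant, the remark after Lemma~\ref{integrability-criterion} tells us $v$ is constant. So $v$ is a single real number with $v\leq k/2$. Now form the combination $5\chi+6\sigma$ using \eqref{chi} and \eqref{sigma}: the $|W^+_{00}|^2$ terms combine with a positive coefficient, the $2|W^+_F|^2$ term from \eqref{sigma} survives with a positive coefficient, and the remaining pointwise scalar is a quadratic polynomial in $v$ and $k$, namely a positive combination of $(60v^2-72kv+18k^2)$ and $(2k-3v)^2$ integrated against $\vol$. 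The key algebraic point to check is that this quadratic, $P(k,v)=\alpha(60v^2-72kv+18k^2)+\beta\cdot 6(2k-3v)^2$ for the appropriate positive $\alpha,\beta$, is $\leq 0$ precisely when $v=k/2$ — i.e.\ that $P(k,v)\leq 0$ with equality iff $v=k/2$, or more likely that after including the Weyl terms the only way the sum $5\chi+6\sigma$ can be positive is impossible unless $v=k/2$. Let me restate this more carefully: since $5\chi+6\sigma>0$ but the integrand is $-\frac{1}{8\pi^2}$ times a sum of manifestly nonnegative terms \emph{plus} the scalar polynomial, we need the scalar polynomial to be $\leq 0$ for all allowed $v\leq k/2$, which would force $5\chi+6\sigma\leq 0$, a contradiction — unless the only consistent possibility is $v=k/2$ everywhere, whence $M$ is K\"ahler.

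Concretely I would compute $5\chi+6\sigma$ from \eqref{chi}, \eqref{sigma}: this equals $\frac{1}{8\pi^2}\int_M \big[(-5+6)|W^+_{00}|^2 + 12|W^+_F|^2 + 36(2k-3v)^2 - 5(60v^2-72kv+18k^2)\big]\cdot(\text{rescaled})$ — I will have to track the signs and coefficients of $\chi$ versus $\sigma$ carefully since \eqref{chi} carries an overall minus sign while \eqref{sigma} does not. The upshot should be that, after using $v=\mathrm{const}\leq k/2$, the scalar part evaluates to a nonpositive multiple of $(k-2v)^2$ (or of $(2v-k)^2$), so that $5\chi+6\sigma$ is simultaneously $>0$ (by hypothesis plus \cite[Lemma~3]{MR1604803}) and $\leq$ something forcing $(k-2v)^2=0$ and all Weyl terms to vanish. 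Then $v=k/2$, and Lemma~\ref{integrability-criterion} gives that $M$ is K\"ahler.

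\textbf{Main obstacle.} The delicate part is the bookkeeping: getting the exact coefficients $5$ and $6$ to be precisely the ones that make the scalar quadratic in $(k,v)$ collapse — after imposing $v\leq k/2$ — to a nonpositive expression vanishing exactly at $v=k/2$. This is why the hypothesis is stated as $5\chi+6\sigma\neq 0$ rather than with other coefficients; I would verify that $5\times(60v^2-72kv+18k^2)$ against $6\times 6(2k-3v)^2$ indeed differ by a perfect square multiple of $(k-2v)$, and that simple-connectivity implies $5\chi+6\sigma\neq 0$ via $b_1=0$ (so $\chi=2+b_2^+ + b_2^-$, $\sigma=b_2^+-b_2^-$, hence $5\chi+6\sigma=10+11b_2^++b_2^->0$). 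The rest is then combining the vanishing of $W^+_{00}$, $W^+_F$ with $W^-=0$ (Theorem~\ref{thm:char}) and $v=k/2$ to identify the K\"ahler structure, which is routine given the earlier sections.
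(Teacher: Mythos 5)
Your plan rests on a misreading of the key external input. \cite[Lemma~3]{MR1604803} is not a Gauss--Bonnet-type \emph{inequality} $5\chi+6\sigma\geq 0$; as used in the paper it is the \emph{identity} $5\chi+6\sigma=0$, valid when the Nijenhuis tensor has non-zero constant norm. The paper's argument runs: if $J$ were not integrable then $v<k/2$ somewhere (Lemma~\ref{integrability-criterion}); since $v$ is constant (by the remark after Lemma~\ref{integrability-criterion}, using the assumed constancy of one of $s_g,s_*,s_C$), equations \eqref{RF2} and \eqref{RFv} give $|A|^4/16=(k-2v)^2=$ non-zero constant, so $|N|^2$ is a non-zero constant; the cited lemma then forces $5\chi+6\sigma=0$, contradicting the hypothesis (in the simply connected case $\chi=2+b_2>0$ and, from \eqref{sigma}, $\sigma\geq 0$, hence $5\chi+6\sigma>0$). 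You correctly isolate the constancy of $v$, but you never convert $v<k/2$ into the statement that $|N|$ is a nowhere-zero constant, which is exactly the hypothesis the lemma needs.

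Your proposed substitute --- computing $5\chi+6\sigma$ directly from \eqref{chi} and \eqref{sigma} and forcing the integrand to vanish --- does not work. Carrying out the bookkeeping you defer: $6\sigma=\frac{4}{8\pi^2}\int_M \bigl(2|W_F^+|^2+|W_{00}^+|^2+6(2k-3v)^2\bigr)$, so
\[
5\chi+6\sigma=\frac{1}{8\pi^2}\int_M \Bigl(8|W_F^+|^2-|W_{00}^+|^2+6\bigl(k^2+12kv-14v^2\bigr)\Bigr)\vol .
\]
The two Weyl terms enter with opposite signs, and the quadratic $k^2+12kv-14v^2$ is not sign-definite on the half-line $v\leq k/2$ (it is positive at $v=0$ and at $v=k/2$, negative for $v\ll 0$), so no pointwise conclusion such as $v=k/2$ can be extracted from the sign of $5\chi+6\sigma$; the coefficients $5$ and $6$ come from the Chern-class identity for nowhere-integrable almost complex structures in \cite{MR1604803}, not from any collapse of this quadratic. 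There is also a sign slip in your check that simple connectivity gives $5\chi+6\sigma\neq 0$: one gets $10+11b_2^+-b_2^-$, which is not manifestly positive; the positivity genuinely requires $\sigma\geq 0$ from \eqref{sigma} together with $\chi>0$.
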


\begin{proof}
Suppose by contradiction that $J$ is not integrable, so that $v<\frac{k}{2}$ at some point, by Lemma~\ref{integrability-criterion}. Then \eqref{RF2} and \eqref{RFv} show that $|N|^2$ is a non-zero constant.
Hence by \cite[Lemma~3]{MR1604803} we must have
\[
	5\chi + 6\sigma = 0.
\]
This contradicts $\chi=2+b_2 > 0$ and $\sigma\geq 0$ from \eqref{sigma}.
\end{proof}

\section{Proof of Main Theorems}\label{sec:proofs}

\subsection{Intermediate results}

Before proving Theorem~\ref{main-theorem} we need to establish two
preliminary results that give K\"ahlerness under topological restrictions.

\begin{proposition}\label{flatcase}
Let $M$ be closed almost K\"ahler $4$-manifold of pointwise constant holomorphic sectional curvature $k$. Suppose $\sigma=0$ for the signature.

Then $k=0$ and $M$ is K\"ahler, with a Ricci-flat metric.
\end{proposition}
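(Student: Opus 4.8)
The plan is to draw everything out of the two integral identities \eqref{sigma} and \eqref{Chern-Weil-Conclusion} together with Lemma~\ref{integrability-criterion}. First I would feed the hypothesis $\sigma=0$ into \eqref{sigma}. Since the integrand $2|W_F^+|^2+|W_{00}^+|^2+6(2k-3v)^2$ is a pointwise sum of squares (here $k$ is a function, but $(2k-3v)^2\geq 0$ regardless), a vanishing integral forces it to vanish identically on $M$. Hence at every point of $M$ we obtain $W_F^+=0$, $W_{00}^+=0$, and — the relation that drives the rest of the argument — $v=\tfrac{2k}{3}$.

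Next I would substitute these three facts into \eqref{Chern-Weil-Conclusion}. The two Weyl terms drop out, and with $v=\tfrac{2k}{3}$ one has $5k-7v=\tfrac{k}{3}$ and $k-2v=-\tfrac{k}{3}$, so that $4(5k-7v)(k-2v)=-\tfrac{4}{9}k^2$ and \eqref{Chern-Weil-Conclusion} collapses to
\[
-\tfrac{4}{9}\int_M k^2 \;=\; \int_M |R_{00}|^2 \;\geq\; 0 .
\]
Both sides must therefore vanish, giving $k\equiv 0$ and $R_{00}\equiv 0$. This is the crux of the proof: the curvature term in \eqref{Chern-Weil-Conclusion} is forced to be non-positive by the constraint $v=\tfrac{2k}{3}$, which is precisely what $\sigma=0$ imposes via \eqref{sigma}; the two index-theoretic identities pull in opposite directions and leave no room unless $k$ is identically zero.

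It remains to read off the structural conclusions. With $k=0$ we get $v=0=\tfrac{k}{2}$, so Lemma~\ref{integrability-criterion} shows that $M$ is K\"ahler. By \eqref{sgv} the Riemannian scalar curvature is $s_g=12v=0$, and by \eqref{RFv} we have $\|R_F\|^2=(k-2v)^2=0$, so $R_F=0$; combined with $R_{00}=0$ and the relation $\|R_0\|^2=\|R_{00}\|^2+\|R_F\|^2$ this forces the trace-free Riemannian Ricci tensor $R_0$ to vanish, whence the Ricci tensor equals $\tfrac{s_g}{4}g=0$ and the metric is Ricci-flat. I do not expect any genuine obstacle here beyond careful bookkeeping: one must treat $k$ as a function throughout, justify the pointwise vanishing in the first step and the sign comparison in the second, and then everything is a formal consequence of the identities already assembled in Sections~\ref{sec:21} and~\ref{sec:integral-formulas}.
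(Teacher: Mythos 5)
Your proposal is correct and follows essentially the same route as the paper: extract $W_F^+=0$, $W_{00}^+=0$, $v=\tfrac{2}{3}k$ from \eqref{sigma}, feed this into \eqref{Chern-Weil-Conclusion} to force $k=0$ and $R_{00}=0$, then invoke Lemma~\ref{integrability-criterion} and \eqref{RFv} for integrability and Ricci-flatness. Your added care in treating $k$ as a function (the paper writes the left side as $-\tfrac{4}{9}k^2\cdot\operatorname{Vol}(M)$, implicitly assuming $k$ constant) is a minor but welcome refinement, not a different argument.
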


\begin{proof}
By \eqref{sigma}, $\sigma=0$ implies $W_F^+=0$, $W_{00}^+ = 0$, and $v=\frac23 k$. Putting this into \eqref{Chern-Weil-Conclusion} shows $R_{00}=0$ and $k=0$, since the left hand side reduces to $-\frac49 k^2\cdot\operatorname{Vol}(M)$ and the right hand side is non-negative. Hence integrability follows from Lemma~\ref{integrability-criterion}. 

Then from \eqref{RFv} we also get $R_F=0$, so $R_0=R_{00}+R_F=0$. Hence the metric is Ricci-flat.
\end{proof}
%The following result is similar to that of Gursky (assumption on the Yamabe constant) or of LeBrun (self-dual and ruled or rational with $2\chi+3\sigma\geq 0$).

We next show a `reverse' Bogomolov--Miyaoka--Yau inequality.

\begin{proposition}\label{thm-sigma-chi}
If $M$ is closed almost K\"ahler of globally constant holomorphic sectional curvature $k \geq 0$ then
\begin{equation}\label{sigma-chi}
	3\sigma \geq \chi.
\end{equation}
Equality holds if and only if $M$ is K\"ahler \textup(even K\"ahler--Einstein\textup).
\end{proposition}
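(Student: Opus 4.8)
The plan is to combine the two global identities \eqref{chi} and \eqref{sigma} into a single inequality and read off when equality is attained. Concretely, I would form the linear combination $3\sigma - \chi$ and substitute the right-hand sides:
\[
3\sigma - \chi = \frac{1}{8\pi^2}\int_M \Bigl(2|W_F^+|^2 + |W_{00}^+|^2 + 6(2k-3v)^2\Bigr) + \frac{1}{8\pi^2}\int_M \Bigl(|W_{00}^+|^2 + 60v^2 - 72kv + 18k^2\Bigr).
\]
This gives $3\sigma - \chi = \frac{1}{8\pi^2}\int_M \bigl(2|W_F^+|^2 + 2|W_{00}^+|^2 + Q(k,v)\bigr)$, where $Q(k,v) = 6(2k-3v)^2 + 60v^2 - 72kv + 18k^2$ is a quadratic form in $(k,v)$. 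Expanding, $6(2k-3v)^2 = 24k^2 - 72kv + 54v^2$, so $Q(k,v) = 42k^2 - 144kv + 114v^2$. The task then reduces to showing $Q(k,v) \geq 0$ pointwise (given $k\geq 0$ globally), together with identifying the equality locus.

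The next step is the pointwise analysis of $Q$. Viewing $Q$ as a quadratic in $v$, its discriminant is $144^2 k^2 - 4\cdot 114\cdot 42\, k^2 = (20736 - 19152)k^2 = 1584\, k^2 > 0$ for $k\neq 0$, so $Q$ is \emph{not} globally nonnegative without using the curvature hypothesis — I must exploit Lemma~\ref{integrability-criterion}, which gives $v \leq k/2$, and also the sign of $s_C$. Here is where $k\geq 0$ enters: from \eqref{sCv}, $s_C = 4k - 2v \geq 4k - k = 3k \geq 0$, and more usefully one should look for the right auxiliary inequality. Actually the cleaner route is to rewrite $Q$ by completing the square around $v = k/2$: set $t = \tfrac{k}{2} - v \geq 0$. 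Then $v = \tfrac{k}{2} - t$, and substituting, $42k^2 - 144k(\tfrac k2 - t) + 114(\tfrac k2 - t)^2 = 42k^2 - 72k^2 + 144kt + \tfrac{114}{4}k^2 - 114kt + 114t^2 = \bigl(42 - 72 + 28.5\bigr)k^2 + 30kt + 114t^2 = -1.5k^2 + 30kt + 114t^2$. This is $-\tfrac{3}{2}k^2 + 30kt + 114t^2$, which is negative for small $t$, so a naive pointwise bound fails — the $-\tfrac32 k^2$ term must be absorbed using the additional Seiberg--Witten-type input, OR I have mis-simplified and should instead just retain \eqref{Chern-Weil-Conclusion} as a third relation. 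I expect the actual argument uses \eqref{Chern-Weil-Conclusion} to eliminate $\int |R_{00}|^2$ and feed back, sharpening the estimate; the main obstacle is getting the algebra to close so that the integrand is manifestly a sum of squares.

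Granting that the integrand can be arranged as a nonnegative combination of $|W_F^+|^2$, $|W_{00}^+|^2$ and a perfect square in $(2k-3v)$ (or equivalently in $t = k/2 - v$), the inequality $3\sigma \geq \chi$ follows immediately by integration. For the equality case: $3\sigma = \chi$ forces, pointwise and almost everywhere, $W_F^+ = 0$, $W_{00}^+ = 0$, and the square term to vanish, which pins down $v$ as a specific multiple of $k$; in particular $v$ is constant, so by the remark following Lemma~\ref{integrability-criterion} all scalar curvatures are constant. If the vanishing square forces $v = k/2$, then Lemma~\ref{integrability-criterion} gives K\"ahlerness directly; otherwise one combines with \eqref{Chern-Weil-Conclusion} — now reading $\int|R_{00}|^2 = \int(\text{pointwise constant})$ with $R_{00}$ also controlled — to conclude $A = 0$. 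Once $J$ is integrable, $W_F^+ = 0$ and $W^+ = 2W_F^+ + W_{00}^+ + \tfrac16(3s_C - s_g)^2$-type identity together with constancy of scalar curvatures force the Einstein condition, giving K\"ahler--Einstein. The hard part, as flagged, is the bookkeeping to show the combined integrand is a genuine sum of squares; once that is in place, both the inequality and the rigidity statement drop out mechanically.
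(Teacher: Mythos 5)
There is a genuine gap, and it is precisely the step you flagged as ``the hard part'': the algebra that makes the integrand nonnegative is never carried out, and your guess about its final shape is wrong. First, your starting identity is off by a factor of two: \eqref{sigma} computes $\tfrac32\sigma$, not $3\sigma$, so the first integrand should be doubled. With the correct combination the quadratic part is $66k^2-216kv+168v^2=6(k-2v)(11k-14v)$, which \emph{is} pointwise nonnegative given $k\geq 0$ and $v\leq k/2$ from Lemma~\ref{integrability-criterion} (since then $14v\leq 7k\leq 11k$) --- so the ``naive'' route does not actually fail; it failed for you only because of the arithmetic slip, which led you to the non-factoring quadratic $42k^2-144kv+114v^2$ and from there to an inconclusive detour. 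The paper instead substitutes \eqref{Chern-Weil-Conclusion} to trade $3|W_{00}^+|^2$ for $3|R_{00}|^2$, arriving at
\begin{equation*}
3\sigma-\chi=\frac{1}{8\pi^2}\int_M |W_F^+|^2+3|R_{00}|^2+6k(k-2v).
\end{equation*}
Note that the curvature term here is \emph{not} a perfect square, contrary to what you grant yourself: $6k(k-2v)$ is nonnegative only because $k\geq 0$ (a \emph{global} constant) and $k-2v\geq 0$. This is exactly where the hypothesis $k\geq 0$ enters, and your proposal never isolates this mechanism.

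The equality analysis is also incomplete as written. From the displayed identity, $3\sigma=\chi$ forces $W_F^+=0$, $R_{00}=0$ and $\int_M k(k-2v)=0$, and one must then split cases: if $k>0$, the integrand $k-2v\geq 0$ has zero integral, so $v=k/2$ everywhere and Lemma~\ref{integrability-criterion} gives K\"ahlerness; if $k=0$, one must feed $W_F^+=0$, $R_{00}=0$ back into \eqref{Chern-Weil-Conclusion} to get $0=\int_M |W_{00}^+|^2+56v^2$, hence $v=0=k/2$. Your sketch (``pins down $v$ as a specific multiple of $k$ \dots\ otherwise one combines with \eqref{Chern-Weil-Conclusion} \dots\ to conclude $A=0$'') gestures at this but does not identify the case distinction or the second use of \eqref{Chern-Weil-Conclusion}, which is genuinely needed when $k=0$. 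In short: the overall strategy (combine \eqref{chi}, \eqref{sigma}, and \eqref{Chern-Weil-Conclusion}) is the right one, but the proposal as it stands proves neither the inequality nor the rigidity statement.
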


\begin{proof}
Estimate \eqref{sigma-chi} follows by combining \eqref{chi}, \eqref{sigma} to get
\begin{equation}\label{nice1}
	3 \sigma - \chi = \frac{1}{8\pi^2}\int_M |W_F^+|^2 + 3|R_{00}|^2 + 6k(k-2v)
\end{equation}
and the fact $v\leq \frac{k}{2}$. Putting $3\sigma=\chi$
into \eqref{nice1} shows
\begin{equation}\label{temp12049u12}
	0 = \int_M |W_F^+|^2 + 3|R_{00}|^2 + 6k(k-2v)
\end{equation}
which is the sum of three non-negative terms. Hence all summands vanish. When $k\neq 0$ is a global non-zero constant we get
\[
	\int_M (k-2v)\vol = 0.
\]
Since $k-2v\geq 0$ this means $k=2v$ everywhere, hence integrability by Lemma~\ref{integrability-criterion}. If on the other hand we suppose $k=0$ then \eqref{temp12049u12} gives $W_F^+ = 0$, $R_{00}=0$. Putting this into \eqref{Chern-Weil-Conclusion} shows
\[
	0 = \int_M |W_{00}^+|^2 + 56v^2
\]
and so $v=0$. We again therefore have $v=\frac{k}{2}=0$ and may apply Lemma~\ref{integrability-criterion}.
\end{proof}

\subsection{Proof of Theorem~\ref{main-theorem}}

By Lemma~\ref{integrability-criterion} we know that if $v=\frac{k}{2}$ everywhere, then $M$ is K\"ahler. Let us argue by contradiction and therefore suppose that $v<\frac{k}{2}$ somewhere (the case $k>0$ can also be deduced directly). Then
\begin{equation}
\int_M c_1(TM) \cup \omega = \int_M \frac{s_C}{2\pi} = \int_M \frac{4k-2v}{2\pi} = \underbrace{\int_M \frac{3k}{2\pi}}_{\geq 0} + \int_M \frac{k-2v}{2\pi} > 0.
\end{equation}
According to results in Seiberg--Witten theory of Taubes~\cite{MR1324704}, Lalonde--McDuff~\cite{MR1432456}, and Liu~\cite{MR1418572}, this implies that $M$ is symplectomorphic to a ruled surface or to $\C P^2$ (see LeBrun~\cite[Section~2]{MR3358072} for an overview, or also \cite[Theorem~1.2]{MR1417784}). In the case $\C P^2$ we get a contradiction to Proposition \ref{thm-sigma-chi}. Suppose therefore that $M$ is a ruled surface. Since $\sigma \leq 0$ for ruled surfaces, by \eqref{sigma} we must have $\sigma=0$, contradicting Proposition~\ref{flatcase}.

Hence we have shown that $M$ is K\"ahler. The classification stated in Theorem~\ref{main-theorem} now follows from \cite[Theorem~2]{MR1348147}.
\hfill\ensuremath{\square}

\subsection{Proof of Theorem~\ref{main-theorem-negative}}

By Theorem~\ref{thm:char} we have $W^-=0$. Hence the Bach tensor vanishes and so \cite[Remark~2, p.~13]{MR1782093} applies
\begin{equation}\label{conclusion-AD}
	0=\int_M \left( \frac{|ds_g|^2}{2}-s_g\cdot |r_0|^2 \right)\vol.
\end{equation}
The proof in \cite{MR1782093} is a consequence of a Weitzenb\"ock formula and the $J$-invariance of the Ricci tensor is used in a crucial way.

Since $s_g=12v \leq 6k < 0$ we have in \eqref{conclusion-AD} two non-negative terms. Hence $s_g$ is constant and $R_0=0$. Therefore $M$ is K\"ahler--Einstein by Proposition~\ref{Einstein-Kaehler}.\hfill\ensuremath{\square}

\subsection{Further Discussion}

Besides removing in Theorem~\ref{main-theorem-negative} the condition that the Ricci curvature is $J$-invariant, we mention the following open problems:
\begin{enumerate}
\item
Analogous results in the non-compact case. The case of Lie groups will be the topic of an upcoming paper of L.~Vezzoni and the first named author~\cite{Lej-Vez}.
\item
Counterexamples to Schur's Theorem. Are there almost K\"ahler manifolds of pointwise
constant \emph{Hermitian} holomorphic sectional curvature that are not globally constant?
In particular, are there compact examples?
\item
There may be an alternative approach to our results using the twistor space of $M$. Note that in our
situation, the tautological almost complex structure on the twistor space is integrable. What are the further
geometric implications of constant holomorphic sectional curvature in terms of the twistor space?
\end{enumerate}

\bibliographystyle{abbrv}%\bibliographystyle{ieeetr}

\bibliography{biblio-OWR}
%\nocite{*}

\end{document}